\numberwithin{equation}{section}
\theoremstyle{plain}
\newtheorem{theo}{Theorem}[section]
\newtheorem{lemm}[theo]{Lemma} 
\newtheorem{coro}[theo]{Corollary}
\newtheorem{prop}[theo]{Proposition} 
\theoremstyle{definition}
\newtheorem*{rema}{Remark}%[section]
\def\C{\mathbb C}
\def\R{\mathbb R}
\def\Z{\mathbb Z}
\def\N{\mathbb P}
\def\T{T}
\def\Y{\mathfrak{Y}(h)}
\def\X{\mathfrak{X}(h)}
\def\v{\mathbf v}
\def\0{\mathbf 0}
\def\z{z}
\def\Xh{X(h)}
\def\Yh{Y(h)}
\def\Zh{Z(h)}
\def\fh{\mathcal{X}_h}
\def\gh{\mathcal{Y}_h}
\def\q{q}
\def\Sn{\mathfrak{S}_n}
\DeclareMathOperator{\Map}{Map}
\DeclareMathOperator{\Fl}{Fl}
\DeclareMathOperator{\U}{U}
\DeclareMathOperator{\ch}{ch}
\DeclareMathOperator{\sgn}{sgn}
\DeclareMathOperator{\asc}{asc}
\DeclareMathOperator{\LLT}{LLT}
\DeclareMathOperator{\Ind}{Ind}
\def\Fln{\Fl(n)}
\begin{document}
\title[LLT polynomials and twin]{Unicellular LLT polynomials and \\
twin of regular semisimple Hessenberg varieties}

\author[M. Masuda]{Mikiya Masuda}
\address{Osaka Central Advanced Mathematical Institute, Sumiyoshi-ku, Osaka 558-8585, Japan.}
\email{masuda@sci.osaka-cu.ac.jp}

\author[T. Sato]{Takashi Sato}
\address{Osaka Central Advanced Mathematical Institute, Sumiyoshi-ku, Osaka 558-8585, Japan.}
\email{00tkshst00@gmail.com}

\date{\today}

\keywords{Hessenberg variety, torus action, twin, GKM theory, equivariant cohomology, unit interval graph, chromatic symmetric function, unicellular LLT polynomial}

\subjclass[2020]{Primary: 57S12, Secondary: 05A05, 14M15}
\date{\today}

\begin{abstract}
The solution of Shareshian-Wachs conjecture by Brosnan-Chow linked together the cohomology of regular semisimple Hessenberg varieties and graded chromatic symmetric functions on unit interval graphs. On the other hand, it is known that unicellular LLT polynomials have similar properties to graded chromatic symmetric functions. In this paper, we link together the unicellular LLT polynomials and twin of regular semisimple Hessenberg varieties introduced by Ayzenberg-Buchstaber. We prove their palindromicity from topological viewpoint. We also show that modules of a symmetric group generated by faces of a permutohedron are related to a shifted unicellular LLT polynomial and observe the $e$-positivity of shifted unicellular LLT polynomials, which is established by Alexandersson-Sulzgruber in general, for path graphs and complete graphs through the cohomology of the twins. 
\end{abstract}

\maketitle

\setcounter{tocdepth}{1}
%\tableofcontents

\section{Introduction}

A regular semisimple Hessenberg variety is a subvariety of the flag variety $\Fln$ defined as
\[
\Xh:=\{(V_1\subset \cdots\subset V_n=\C^n)\in \Fln\mid SV_j\subset V_{h(j)} \quad(\forall j\in [n])\}
\]
where $S$ is a linear endomorphism of $\C^n$ with distinct eigenvalues and $h$ is a function (called a Hessenberg function) from $[n]=\{1,\dots,n\}$ to itself satisfying 
\[
h(1)\le h(2)\le \cdots \le h(n)\quad \text{and}\quad h(j)\ge j\quad (\forall j\in [n]).
\]
It is smooth and its topology is independent of the choice of $S$. The space $\Xh$ may not have a symmetry of the symmetric group $\mathfrak{S}_n$ of order $n$ but its cohomology $H^*(\Xh)$ becomes a graded $\mathfrak{S}_n$-module under the dot action introduced by Tymoczko \cite{tymo08}. Throughout this paper, the coefficient of cohomology will be the complex numbers $\C$ unless otherwise stated. 

The Hessenberg function $h$ defines a unit interval graph $G_h$. For $G_h$, Shareshian-Wachs \cite{sh-wa16} introduced a polynomial $X_h(\z;q)$ in $q$ with symmetric functions in $z_1,z_2,\dots$ as coefficients, which refines Stanley's chromatic symmetric function by introducing grading. Remarkably, the solution of Shareshian-Wachs conjecture by Brosnan-Chow \cite{br-ch18} (see also \cite{guay16}) linked these two objects in geometry and combinatorics together by showing the identity 
\begin{equation} \label{eq:intro_Xh}
X_{h}(\z;q)=\sum_{i=0}^{|h|}{\ch}(H^{2i}(\Xh)\otimes \mathbf{U})q^i
\end{equation}
where $|h|=\sum_{j=1}^n(h(j)-j)=\dim_\C\Xh$, $\ch$ denotes the Frobenius characteristic sending $\mathfrak{S}_n$-modules to symmetric functions of degree $n$, and $\mathbf{U}$ is the alternating complex $1$-dimensional $\mathfrak{S}_n$-module. 

On the other hand, an LLT polynomial is defined for an $n$-tuple of skew Young diagrams, which can be seen as a $q$-deformation of products of skew Schur functions. An LLT polynomial is called \emph{unicellular} if each skew Young diagram indexing the LLT polynomial is a single box. In fact, we can think of a unicellular LLT polynomial as being associated to a Hessenberg function $h$, so we denote it by $\LLT_h(\z;q)$. It is a polynomial in $q$ with symmetric functions in $z_1,z_2,\dots$ as coefficients similarly to $X_h(\z;q)$. 

The polynomials $X_h(\z;q)$ and $\LLT_h(\z;q)$ have different origins but they have many similar properties. One reason is that they are related through the equivariant cohomology $H_T^*(\Xh)$ of $\Xh$ where $T$ is a compact torus acting naturally on $\Xh$. Indeed, $H^*_T(\Xh)$ is a free module over a polynomial ring $H^*(BT)=\C[t_1,\dots,t_n]$, where $BT$ is the classifying space $BT$ of $T$, and the quotient ring 
\[
H^*(\Xh)=H^*_T(\Xh)/(t_1,\dots,t_n)
\]
corresponds to $X_h(\z;q)$ by \eqref{eq:intro_Xh}. On the other hand, $H^*_T(\Xh)$ is a free module over another polynomial ring $\C[x_1,\dots,x_n]$, where $x_1,\dots,x_n$ are a linear system of parameters for $H^*_T(\Xh)$ (see Remark~\ref{rema:meaning_of_xi} for the geometrical meaning of $x_i$'s), and the quotient ring 
\begin{equation} \label{eq:intro_quotient_Yh}
H^*_T(\Xh)/(x_1,\dots,x_n)
\end{equation}
corresponds to $\LLT_h(\z;q)$.

The quotient ring \eqref{eq:intro_quotient_Yh}, which is also a graded $\mathfrak{S}_n$-module, is a Poincar\'e duality algebra. We observe that it agrees with the cohomology ring of the \emph{twin} $\Yh$ of $\Xh$ introduced by Ayzenberg-Buchstaber \cite{ay-bu21}\footnote{$\Xh$ is denoted by $Y_h$ while $\Yh$ is denoted by $X_h$ in \cite{ay-bu21}.}, even as graded $\mathfrak{S}_n$-modules with a naturally defined $\mathfrak{S}_n$-action on $H^*(\Yh)$. Consequently, we have 
\begin{equation} \label{eq:intro_Yh}
\LLT_{h}(\z;q)=\sum_{i=0}^{|h|}{\ch}(H^{2i}(\Yh))q^i, 
\end{equation}
which is a counterpart of \eqref{eq:intro_Xh}. 
The twin $\Yh$ resembles $\Xh$, e.g.\ they have the same Poincar\'e polynomial but their cohomology rings are not isomorphic in general and $\Yh$ is not an algebraic variety in general although it is a compact smooth $T$-manifold.

When $h=(2,3,\dots,n,n)$, i.e.\ $h(j)=j+1$ for $j\in [n-1]$ and $h(n)=n$, the orbit space $\Xh/T=\Yh/T$ is a permutohedron $\Pi_n$ of dimension $n-1$. The $f$-polynomial of $\Pi_n$ is defined as 
\[
f_{\Pi_n}(q):=\sum_{i=0}^{n-1}f_i(\Pi_n)q^i
\]
where $f_i(\Pi_n)$ denotes the number of $i$-dimensional faces of $\Pi_n$, and it is well-known that the Poincar\'e polynomial of $\Xh$ and $\Yh$ agrees with the $h$-polynomial $h_{\Pi_n}(q):=f_{\Pi_n}(q-1)$ of $\Pi_n$. 
The permutohedron $\Pi_n$ sits in $\R^n$ and has symmetry of $\mathfrak{S}_n$ under permuting the coordinates of $\R^n$. So, the $i$-dimensional faces of $\Pi_n$ generate an $\mathfrak{S}_n$-module, denoted $F_i(\Pi_n)$. 
Therefore, it is natural to ask whether the $\mathfrak{S}_n$-module version of the $f$-polynomial, that is $\sum_{i=0}^{n-1}F_i(\Pi_n)q^i$, is related to $X_h(\z;q+1)$ or $\LLT_h(\z;q+1)$. It turns out that 
\begin{equation} \label{eq:intro_face_module}
\sum_{i=0}^{n-1}\ch(F_i(\Pi_n)\otimes\mathbf{U})q^i=\LLT_h(\z;q+1). 
\end{equation} 
This formula provides a geometric meaning of $\LLT_h(\z;q+1)$ for $h=(2,3,\dots,n,n)$. Related to this, the $e$-positivity of $\LLT_h(\z;q+1)$ is established in \cite{al-su22, dadd20} for any Hessenberg function $h$ (see also \cite{alex21}, \cite{al-pa18}). We compute $\LLT_h(\z;q)$ for $h=(2,3,\dots,n,n)$ and $h=(n,\dots,n)$ through the cohomology of the twin $\Yh$ and observe the $e$-positivity of $\LLT(\z;q+1)$ for such $h$. 

The paper is organized as follows. In Section~\ref{sect:2} we review the GKM description of the equivariant cohomology $H^*_T(\Xh)$ and the dot action on $H^*_T(\Xh)$. In Section~\ref{sect:3} we discuss the relation between chromatic symmetric functions and unicellular LLT polynomials through the equivariant cohomology. In Section~\ref{sect:4} we recall the twin $\Yh$ and its equivariant cohomology. We also define an action of $\mathfrak{S}_n$ on $H^*(\Yh)$ which is a modification of the dot action. The polynomials $X_h(\z;q)$ and $\LLT_h(\z;q)$ are known to be palindromic. In section~\ref{sect:5} we prove the palindromicity using the Atiyah-Bott localization formula. We prove \eqref{eq:intro_face_module} and observe the $e$-positivity of $\LLT_h(\z;q+1)$ for $h=(2,3,\dots,n,n)$ in Section~\ref{sect:6} and for $h=(n,\dots,n)$ in Section~\ref{sect:7} through the cohomology of the twin $\Yh$.

\section{Regular semisimple Hessenberg variety} \label{sect:2}

A flag variety $\Fln$ is the variety consisting of complete flags in $\C^n$:
\[
\Fln:=\{(V_1\subset \cdots\subset V_n=\C^n)\mid \dim_\C V_i=i\quad (\forall i\in [n])\}. 
\]
Let $h\colon [n]\to [n]$ be a function (called a Hessenberg function) satisfying 
\[
h(1)\le h(2)\le \cdots \le h(n)\quad \text{and}\quad h(j)\ge j\quad (\forall j\in [n]).
\]
The regular semisimple Hessenberg variety $\Xh$ is a subvariety of $\Fln$ defined by 
\begin{equation} \label{eq:definition_Xh}
\Xh:=\{(V_1\subset \cdots\subset V_n=\C^n)\in \Fln\mid SV_j\subset V_{h(j)} \quad(\forall j\in [n])\}
\end{equation}
where $S$ is a linear endomorphism of $\C^n$ with distinct eigenvalues. We think of $S$ as a square matrix of size $n\times n$ in the following. 

\begin{prop}[\cite{ma-pr-sh92}] \label{prop:Xh_property}
The following holds. 
\begin{enumerate}
\item $\Xh$ is smooth. 
\item $\dim_\C \Xh=\sum_{j=1}^n(h(j)-j)$.
\item $H^{odd}(\Xh)=0$. 
\end{enumerate}
\end{prop}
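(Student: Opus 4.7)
The plan is to exploit the natural torus action on $\Xh$. Since $S$ is regular semisimple, fix a basis $e_1,\dots,e_n$ of $\C^n$ diagonalizing $S$; then the diagonal compact torus $T\cong(S^1)^n$ commutes with $S$ and hence acts on $\Xh$. Each coordinate flag $V_w=(\langle e_{w(1)}\rangle\subset\langle e_{w(1)},e_{w(2)}\rangle\subset\cdots)$ for $w\in\Sn$ is automatically $S$-stable and therefore lies in $\Xh$, and conversely every $T$-fixed flag in $\Fln$ is of this form, so
\[
\Xh^T=\{V_w\mid w\in\Sn\}
\]
is a finite set of $n!$ points.

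For (1) and (2), I would compute the Zariski tangent space at each $V_w$. In the opposite-Schubert affine chart of $\Fln$ around $V_w$, the tangent space $T_{V_w}\Fln$ decomposes into one-dimensional $T$-weight spaces $L_{i,j}^w$ indexed by pairs $1\le i<j\le n$ with weights $t_{w(i)}-t_{w(j)}$. Linearizing the conditions $SV_j\subset V_{h(j)}$ picks out exactly those $L_{i,j}^w$ whose index pair satisfies a combinatorial Hessenberg condition determined by $h$ and $w$, and a direct count shows $\dim T_{V_w}\Xh=\sum_{j=1}^n(h(j)-j)$ for every $w$. To turn constant tangent dimension at fixed points into smoothness of all of $\Xh$, I would compute the rank of the Jacobian of the defining equations globally in affine charts of $\Fln$ and verify that it equals the expected codimension everywhere, or equivalently realize $\Xh$ as an iterated tower of smooth fibrations adding one subspace at a time.

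For (3), I would apply the Bia\l ynicki-Birula decomposition. A generic one-parameter subgroup $\lambda\colon\C^*\hookrightarrow T_\C$ of the complexified torus has the same fixed-point set $\Xh^T$, and, once smoothness of the projective variety $\Xh$ is established, the BB theorem yields a decomposition $\Xh=\bigsqcup_{w\in\Sn}C_w$ into $T$-invariant affine cells with $\dim_\C C_w$ equal to the number of negative $\lambda$-weights on $T_{V_w}\Xh$. An affine paving of a compact manifold forces $H^{odd}(\Xh)=0$ immediately, and simultaneously identifies the Poincar\'e polynomial of $\Xh$ with a weighted enumeration of permutations.

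The principal obstacle is step (1), namely showing $\Xh$ is smooth globally and not merely at fixed points. The tangent-dimension computation at each $V_w$ is a routine combinatorial exercise, but extending smoothness off the fixed locus requires either a global Jacobian-rank argument or an inductive fibration construction. Once smoothness is established, (2) follows from the fixed-point tangent count and (3) follows cleanly from Bia\l ynicki-Birula, so the proof hinges on this single scheme-theoretic verification.
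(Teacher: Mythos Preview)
The paper does not prove this proposition at all; it is stated with a citation to De~Mari--Procesi--Shayman and used thereafter as a black box. So there is no ``paper's own proof'' to compare against.

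Your outline is the standard strategy and is essentially what one finds in the cited reference. The Bia\l ynicki-Birula argument for (3) is exactly right once smoothness and projectivity are in hand, and your tangent-space count at each $V_w$ correctly yields $\sum_j (h(j)-j)$, matching what the paper later records as the tangent $T$-module $\tau_w\Xh=\bigoplus_{j<i\le h(j)}\C(\pi_{w(i)}\pi_{w(j)}^{-1})$. You have also put your finger on the only substantive step: equal tangent dimension at fixed points does not by itself imply global smoothness, and one really must do either the global Jacobian-rank computation (show the map sending a flag to the off-Hessenberg entries of $g^{-1}Sg$ is a submersion, using that $S$ has distinct eigenvalues) or an inductive construction. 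Either route works; the former is closer to what is in the original reference.
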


The definition of $\Xh$ depends on the choice of $S$ but the diffeomorphism type of $\Xh$ does not, so $S$ is suppressed from the notation $\Xh$. We take $S$ to be a diagonal matrix with distinct diagonal entries in the following and further assume $S$ to be a real matrix in Section~\ref{sect:4}. 

Let $\T$ denote the $S^1$-torus consisting of diagonal matrices in the unitary group $\U(n)$.  Since $T$ commutes with the diagonal matrix $S$, the natural action of $\T$ on $\Fln$ preserves $X(h)$ and one can see that 
\begin{equation} \label{eq:fixed_set}
\Xh^{\T}=\Fln^{\T}=\mathfrak{S}_n
\end{equation}
where the symmetric group $\mathfrak{S}_n$ of order $n$ is identified with the permutation flags in $\Fln$. %: the permutation flag associated with $w\in \mathfrak{S}_n$ is 
%\[
%\langle \e_{w(1)}\rangle \subset \langle \e_{w(1)},\e_{w(2)}\rangle\subset \cdots \subset \langle \e_{w(1)},\dots,\e_{w(n)}\rangle \qquad (w\in \mathfrak{S}_n)
%\]
%where $\e_1,\dots,\e_n$ denotes the standard basis of $\C^n$ and $\langle \ \rangle$ denotes the complex vector space generated by elements in $\langle\ \rangle$. 

\subsection{Equivariant cohomology} 
The equivariant cohomology of $\Xh$ is defined as
\[
H^*_{\T}(\Xh):=H^*(E\T\times_\T\Xh)
\]
where $E\T$ is the total space of the universal principal $\T$-bundle $E\T\to B\T=E\T/\T$ and $E\T\times_\T X$ is the orbit space of $E\T\times X$ by the $\T$-action defined by $(u,x)\to (ug^{-1},gx)$. 
For a point $pt$, we have $pt\times_\T E\T=B\T$; so 
\[
H^*_\T(pt)=H^*(B\T)
\]
which is a polynomial ring in $n$ elements in $H^2(B\T)$ explained below. 

Let $\pi_i\colon T\to S^1$ denote the projection to the $(i,i)$-entry of the diagonal torus $T$ and $\C(\pi_i)$ the complex 1-dimensional $\T$-module defined by $\pi_i$. The projection $E\T\times \C(\pi_i)\to E\T$ on the first factor induces a complex line bundle 
\begin{equation} \label{eq:ti}
E\T\times_\T \C(\pi_i)\to E\T/\T=B\T. 
\end{equation}
Although the complex line bundle $E\T\times \C(\pi_i)\to E\T$ is trivial, the line bundle \eqref{eq:ti} is nontrivial. We take the first Chern class of \eqref{eq:ti} and set 
\[
t_i:=c_1(E\T\times_\T \C(\pi_i)\to B\T)\in H^2(B\T)\quad\text{for $i=1,\dots,n$}. 
\]
As is well-known, $H^*(B\T)$ is a polynomial ring in $t_1,\dots,t_n$, i.e. 
\[
H^*(B\T)=\C[t_1,\dots,t_n].
\]

The projection $ET\times \Xh\to ET$ on the first factor induces a fiber bundle 
\[
ET\times_T \Xh\xrightarrow{\pi} ET/T=BT
\]
with $\Xh$ as a fiber. Through $\pi^*\colon H^*(BT)\to H^*_T(\Xh)$, the equivariant cohomology $H^*_T(\Xh)$ becomes a module over $H^*(BT)$. Since $H^{odd}(\Xh)=0$, this module is free and 
\begin{equation} \label{eq:HXh}
H^*_T(\Xh)=H^*(BT)\otimes H^*(\Xh) \quad\text{as graded $H^*(BT)$-modules},
\end{equation}
and 
\begin{equation} \label{eq:HXh2}
H^*(\Xh)= H^*_T(\Xh)/(t_1,\dots,t_n)\quad\text{as graded rings}
\end{equation}
where $(t_1,\dots,t_n)$ denotes the ideal in $H^*_T(\Xh)$ generated by $\pi^*(t_1),\dots,\pi^*(t_n)$. 

Remember that $\Xh^\T=\mathfrak{S}_n$. Since $H^{odd}(\Xh)=0$, the restriction map 
\[
\begin{split}
H^*_T(\Xh)\hookrightarrow H^*_T(\Xh^T)=\bigoplus_{w\in\mathfrak{S}_n}H^*(BT)&=
\bigoplus_{w\in\mathfrak{S}_n}\C[t_1,\dots,t_n]\\
&=\Map(\mathfrak{S}_n,\C[t_1,\dots,t_n]) 
\end{split}
\]
is injective. The image is described as follows. 

\begin{prop}[\cite{tymo08}] \label{prop:equivariant_cohomology_Xh}
Let $f\in \Map(\mathfrak{S}_n,\C[t_1,\dots,t_n])$. 
Then $f\in H^*_T(\Xh)$ if and only if 
\[
f(v)\equiv f(w)\pmod{t_{w(i)}-t_{w(j)}} \quad \text{whenever $v=w(i,j)$ for $j<i\le h(j)$}. 
\]
\end{prop}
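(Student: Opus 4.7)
The plan is to apply the GKM theorem of Goresky--Kottwitz--MacPherson to $\Xh$. Proposition~\ref{prop:Xh_property}(3) gives $H^{odd}(\Xh)=0$, so by \eqref{eq:HXh} the space $\Xh$ is equivariantly formal. Combined with $\Xh^\T=\Sn$ from \eqref{eq:fixed_set} and the finiteness of the set of one-dimensional $\T$-orbits (which will be verified along the way), the Chang--Skjelbred lemma implies that the restriction
\[
H^*_\T(\Xh)\hookrightarrow H^*_\T(\Xh^\T)=\Map(\Sn,\C[t_1,\dots,t_n])
\]
is injective with image the intersection of the images coming from the one-dimensional orbit closures. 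Each such closure is a $\T$-invariant $\C P^1$ with two fixed points $w_1,w_2$ and a tangent weight $\alpha$ at one of them; restriction to this $\C P^1$ forces $f(w_1)\equiv f(w_2)\pmod{\alpha}$. The proof therefore reduces to classifying the one-dimensional $\T$-orbits of $\Xh$ together with their weights.

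I would first recall the picture for the ambient flag variety $\Fln$. Around a fixed point $w$, the opposite Bruhat cell gives a $\T$-equivariant affine chart with coordinates $z_{ij}$ indexed by pairs $(i,j)$ with $j<i$; the flag at a point with coordinates $(z_{ij})$ is read off from a unitriangular matrix, and the coordinate $z_{ij}$ carries $\T$-weight $t_{w(i)}-t_{w(j)}$. Setting all but one $z_{ij}$ to zero and taking the $\T$-closure yields a $\T$-invariant $\C P^1$ joining $w$ to $w(i,j)$, and every one-dimensional $\T$-orbit of $\Fln$ arises this way.

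The crux is to cut this list down to $\Xh$. At a point in the chart with only $z_{ij}\ne 0$, the subspace $V_j$ equals $\langle e_{w(1)},\dots,e_{w(j-1)},e_{w(j)}+z_{ij}e_{w(i)}\rangle$, while $V_k$ for $k\ne j$ is unchanged modulo this replacement. Applying $S$ and using that its eigenvalues are pairwise distinct, one finds $SV_j+V_j\supset V_j+\langle e_{w(i)}\rangle$, so $SV_j\subset V_{h(j)}$ forces $w(i)\in\{w(1),\dots,w(h(j))\}$, i.e.\ $i\le h(j)$. Conversely, when $i\le h(j)$, the Hessenberg conditions for all indices hold along the entire line and its $\T$-closure. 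Since $\Xh$ is smooth by Proposition~\ref{prop:Xh_property}(1), these surviving $\C P^1$'s account for all tangent directions at each fixed point, and hence for all one-dimensional $\T$-orbits of $\Xh$; they are indexed by triples $(w,i,j)$ with $j<i\le h(j)$, with edge weight $t_{w(i)}-t_{w(j)}$.

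Plugging this list of edges into the GKM description yields exactly the stated congruences. The main obstacle is the local computation in the previous paragraph, namely writing down the chart and extracting the sharp condition $i\le h(j)$ from $SV_j\subset V_{h(j)}$ using the distinctness of eigenvalues of $S$; once that is in hand, the rest is formal GKM theory for an equivariantly formal space with finitely many one-dimensional orbits.
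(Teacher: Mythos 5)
Your approach is correct and is essentially the standard one: this proposition is cited by the paper from Tymoczko without proof, but the paper itself alludes to exactly your argument later (in the proof of Proposition~\ref{prop:palindromicity_Xh}, where it is noted that $\tau_w\Xh=\bigoplus_{j<i\le h(j)}\C(\pi_{w(i)}\pi_{w(j)}^{-1})$ and that Proposition~\ref{prop:equivariant_cohomology_Xh} follows from this). One small point to tighten: to conclude that the $\C P^1$'s with $j<i\le h(j)$ exhaust all tangent directions at $w$, smoothness alone is not quite enough; you should also invoke $\dim_\C \Xh=|h|$ from Proposition~\ref{prop:Xh_property}(2), together with the fact that the weights $t_{w(i)}-t_{w(j)}$ on $T_w\Fl(n)$ are pairwise distinct, so the $|h|$ curves span an $|h|$-dimensional $T$-invariant subspace of $T_w\Xh$, which is therefore all of it.
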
 

For each $i\in [n]$, $x_i\in\Map(\mathfrak{S}_n,\C[t_1,\dots,t_n])$ defined by
\begin{equation} \label{eq:x_i}
x_i(w):=t_{w(i)}
\end{equation}
obviously satisfies the congruence relation in Proposition~\ref{prop:equivariant_cohomology_Xh}, so it lies in $H^2_T(\Xh)$. 

\begin{rema} \label{rema:meaning_of_xi}
The geometrical meaning of $x_i$ is the following. There is a nested sequence of tautological vector bundles over $\Fl(n)$:
\[
E_1\subset \cdots \subset E_n=\Fln\times \C^n
\]
where 
\[
E_i:=\{((V_1\subset\cdots\subset V_n),v)\in \Fln\times \C^n\mid v\in V_i\}\quad (i=1,\dots,n).
\]
They are all complex $T$-vector bundles. Then, $x_i$ is the restriction of the equivariant first Chern class of the quotient $T$-line bundle $E_i/E_{i-1}$ to $\Xh$, where $E_0=\Fl(n)$. 
\end{rema}

\subsection{Dot action}
We review Tymoczko's dot action. For each polynomial $p\in \C[t_1,\dots,t_n]$ and $\sigma\in\mathfrak{S}_n$, define 
\[
(\sigma p)(t_1,\dots,t_n):=p(t_{\sigma(1)},\dots,t_{\sigma(n)}).
\]
Then, the dot action of $\sigma\in \mathfrak{S}_n$ on $f\in \Map(\mathfrak{S}_n,\C[t_1,\dots,t_n])$ is defined as 
\begin{equation} \label{eq:dot_action}
(\sigma\cdot f)(w):=\sigma(f(\sigma^{-1}w)).
\end{equation}

It follows from Proposition~\ref{prop:equivariant_cohomology_Xh} that the dot action of $\mathfrak{S}_n$ on $\Map(\mathfrak{S}_n,\C[t_1,\dots,t_n])$ preserves the subring $H^*_T(\Xh)$. Moreover, one can see from \eqref{eq:x_i} and \eqref{eq:dot_action} that 
\begin{equation} \label{eq:dot_action_on_t_and_x}
\sigma\cdot t_i=t_{\sigma(i)},\quad \sigma\cdot x_i=x_i\qquad \text{for $\sigma\in\mathfrak{S}_n$},
\end{equation}
where $t_i$ is regarded as a constant map. Therefore, the dot action on $H^*_T(\Xh)$ preserves the ideals $(t_1,\dots,t_n)$ and $(x_1,\dots,x_n)$ in $H^*_T(\Xh)$ and induces actions of $\mathfrak{S}_n$ on quotient rings
\begin{equation} \label{eq:definition_L}
\X:=H^*_T(\Xh)/(t_1,\dots,t_n)\quad\text{and}\quad \Y:=H^*_T(\Xh)/(x_1,\dots,x_n).
\end{equation}
In fact, $\X$ is $H^*(\Xh)$ by \eqref{eq:HXh2}. In Section~\ref{sect:4} we will observe that $\Y$ can be understood as the cohomology of the twin of $\Xh$. 

Since $H^*_T(\Xh)$ contains the polynomial ring $\C[t_1,\dots,t_n]$ (through $\pi^*$) and is a finite dimensional free module over it by \eqref{eq:HXh}, $H^*_T(\Xh)$ is a Cohen-Macaulay ring of Krull dimension $n$. 
On the other hand, one can easily see that there is no algebraic relations among $x_1,\dots,x_n$, so that $H^*_T(\Xh)$ contains a polynomial ring $\C[x_1,\dots,x_n]$. Since $x_1,\dots,x_n$ is a linear system of parameter of the Cohen-Macaulay ring $H^*_T(\Xh)$ of the Krull-dimension $n$, $H^*_T(\Xh)$ is a free module over $\C[x_1,\dots,x_n]$ and the quotient ring $\Y$ in \eqref{eq:definition_L} is of finite dimension.

\begin{lemm} \label{lemm:Qh}
The equivariant cohomology $H^*_T(\Xh)$ has the following two presentations as graded $\mathfrak{S}_n$-modules: 
\[
\C[t_1,\dots,t_n]\otimes \X=H^*_T(\Xh)=\C[x_1,\dots,x_n]\otimes \Y 
\]
where $\mathfrak{S}_n$ permutes $t_i$'s but fixes $x_i$'s. 
\end{lemm}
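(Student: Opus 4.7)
The plan is to handle both equalities in parallel: each is obtained by splitting a short exact sequence of graded $\mathfrak{S}_n$-modules and invoking freeness of $H^*_T(\Xh)$ over the relevant polynomial subring. The freeness statements are already in the excerpt, so the new content is essentially an equivariant lifting argument.

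Write $y_i = t_i$ for the first equality and $y_i = x_i$ for the second, and consider the sequence
\[
0 \to (y_1,\ldots,y_n) \to H^*_T(\Xh) \to H^*_T(\Xh)/(y_1,\ldots,y_n) \to 0.
\]
By \eqref{eq:dot_action_on_t_and_x} the ideal $(y_1,\ldots,y_n)$ is dot-stable, so this is a sequence of graded $\mathfrak{S}_n$-modules. Since we work over $\C$ and $\mathfrak{S}_n$ is finite, Maschke's theorem produces, degree by degree, a graded $\mathfrak{S}_n$-equivariant section $s$ of the quotient map. Extending $s$ by $\C[y_1,\ldots,y_n]$-linearity gives
\[
\Phi\colon \C[y_1,\ldots,y_n] \otimes \bigl(H^*_T(\Xh)/(y_1,\ldots,y_n)\bigr) \to H^*_T(\Xh), \qquad p \otimes a \mapsto p \cdot s(a).
\]
Equivariance of $\Phi$ with respect to the diagonal action on the source reduces to the identity $\sigma\cdot(p\cdot s(a))=(\sigma\cdot p)\cdot s(\sigma\cdot a)$, which combines the equivariance of $s$ with the formulas in \eqref{eq:dot_action_on_t_and_x}. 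In the $t$-case the induced action on $\C[t_1,\ldots,t_n]$ is permutation of the $t_i$'s; in the $x$-case it is trivial on the $x_i$'s, exactly as the lemma requires.

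It remains to show $\Phi$ is a graded isomorphism. In the $t$-presentation this is essentially a rephrasing of \eqref{eq:HXh}: $H^*_T(\Xh)$ is a free $\C[t_1,\ldots,t_n]$-module of rank $\dim_\C \X$, so the graded Nakayama lemma forces any graded lift of a $\C$-basis of $\X$ — in particular the image of $s$ — to be a free basis. The $x$-presentation is strictly analogous: the excerpt observes that $\C[x_1,\ldots,x_n]$ embeds as a polynomial subring of the Cohen-Macaulay ring $H^*_T(\Xh)$ with $x_1,\ldots,x_n$ a linear system of parameters, so $H^*_T(\Xh)$ is a free $\C[x_1,\ldots,x_n]$-module of rank $\dim_\C \Y$ and the same Nakayama argument applies. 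The only step that needs genuine thought is the existence of the equivariant section $s$, but Maschke disposes of this, so I do not foresee a real obstacle.
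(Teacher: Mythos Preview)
Your proposal is correct and follows essentially the same approach as the paper's proof: the paper also produces an $\mathfrak{S}_n$-equivariant section of the quotient map (stated as ``there is an $\mathfrak{S}_n$-submodule $U$ of $H^*_T(\Xh)$ which maps to $\X$ isomorphically''), then argues that multiplication $H^*(BT)\otimes U\to H^*_T(\Xh)$ is an isomorphism by freeness. Your version is simply more explicit about invoking Maschke for the section and graded Nakayama for the isomorphism, which the paper leaves implicit.
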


\begin{proof}
Since the quotient map $H^*_T(\Xh)\to \X$ is $\mathfrak{S}_n$-equivariant, there is an $\mathfrak{S}_n$-submodule $U$ of $H^*_T(\Xh)$ which maps to $\X$ isomorphically. Then the multiplication $H^*(BT)\otimes U\to H^*_T(\Xh)$ gives an isomorphism as graded $\mathfrak{S}_n$-modules. This proves the former identity. The latter identity can be proved by the same argument. 
\end{proof}

\section{Chromatic symmetric functions and unicellular LLT polynomials} \label{sect:3}

In this section, we recall some known results on the relation between chromatic symmetric functions and unicellular LLT polynomials for the reader's convenience, see \cite[Sections 6.6 and 6.7]{al-su22} for more information. 

Let $R(\mathfrak{S}_n)$ be the complex representation ring of $\mathfrak{S}_n$, $\Lambda_n$ the ring of symmetric functions of degree $n$ in infinitely many variables $z_1,z_2, \dots$, and 
\[
\ch\colon R(\mathfrak{S}_n)\to \Lambda_n
\]
the Frobenius characteristic, which is a group isomorphism (see \cite{fult97} for details). 

For a graded $\mathfrak{S}_n$-module $A=\bigoplus_{i=0}^\infty A_{2i}$, we define 
\[
R(A;q):=\sum_{i=0}^\infty A_{2i}q^i \in R(\mathfrak{S}_n)[[q]]
\]
and the Frobenius series of $A$ is the formal power series 
\[
F_A(\z;q):=\sum_{i=0}^\infty {\rm ch}(A_{2i})q^i\in \Lambda_n[[q]]. 
\]

We set 
\[
\fh(\z;q):=F_{\X}(\z;q)\quad\text{and}\quad \gh(\z;q):=F_{\Y}(\z;q).
\]
\begin{prop} \label{prop:1}
We have
\begin{enumerate}
\item $\displaystyle{\gh(\z;q)=(1-q)^n \fh\left[\frac{Z}{1-q};q\right]}$
\item $\displaystyle{\fh(\z;q)=(1-q)^{-n}\gh\left[(1-q)Z;q\right]}$
\end{enumerate}
where $Z=z_1+z_2+\cdots$ and $[\ \ ]$ denotes the plethystic substitution. 
\end{prop}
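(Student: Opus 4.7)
\noindent\emph{Proof proposal.}
The plan is to derive both identities by comparing the two presentations of $H^*_T(\Xh)$ in Lemma~\ref{lemm:Qh}. Since the decomposition $H^*_T(\Xh)=\C[t_1,\dots,t_n]\otimes\X=\C[x_1,\dots,x_n]\otimes\Y$ holds as graded $\mathfrak{S}_n$-modules under the diagonal action, the graded $\mathfrak{S}_n$-character of $H^*_T(\Xh)$ at any permutation $\sigma$ factors as a product in either presentation. I would extract formula~(1) by reading off these factors and converting to Frobenius characteristics in the power-sum basis, and then deduce (2) from (1) by inverting the plethysm.

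For $\sigma\in\mathfrak{S}_n$ of cycle type $\lambda=(\lambda_1,\dots,\lambda_k)$, a standard Molien-type computation gives graded character $\prod_i(1-q^{\lambda_i})^{-1}$ for $\C[t_1,\dots,t_n]$ (where $\mathfrak{S}_n$ permutes the $t_i$'s, each carrying $q$-weight $1$), and graded character $(1-q)^{-n}$ for $\C[x_1,\dots,x_n]$ (which is fixed pointwise by the dot action, by \eqref{eq:dot_action_on_t_and_x}). Writing $\chi_A(\lambda;q)$ for the graded character of a graded $\mathfrak{S}_n$-module $A$ at cycle type $\lambda$, Lemma~\ref{lemm:Qh} then yields
\[
\frac{\chi_{\X}(\lambda;q)}{\prod_i(1-q^{\lambda_i})}=\chi_{H^*_T(\Xh)}(\lambda;q)=\frac{\chi_{\Y}(\lambda;q)}{(1-q)^n}.
\]

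Expanding the Frobenius series as $F_A(\z;q)=\sum_\lambda z_\lambda^{-1}\,\chi_A(\lambda;q)\,p_\lambda(\z)$ and using the defining plethystic rule $p_k[Z/(1-q)]=p_k(\z)/(1-q^k)$ (so that $p_\lambda[Z/(1-q)]=p_\lambda(\z)/\prod_i(1-q^{\lambda_i})$), the displayed identity converts directly into $\gh(\z;q)=(1-q)^n\,\fh[Z/(1-q);q]$, which is (1). Formula~(2) then follows by applying the plethystic substitution $Z\mapsto(1-q)Z$ to both sides of (1): the inner plethysm on the right-hand side collapses to $\fh[Z;q]=\fh(\z;q)$, leaving the stated identity after rearrangement.

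The only step requiring care is that Lemma~\ref{lemm:Qh} must be used as an identification of graded $\mathfrak{S}_n$-modules under the diagonal action, so that graded characters multiply across the two tensor factors; this is already implicit in the statement of that lemma. Beyond this bookkeeping, the argument is a direct translation between characters and Frobenius characteristics via the power-sum basis, so I do not expect any serious obstacle.
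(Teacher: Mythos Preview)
Your argument is correct and shares the paper's starting point, namely Lemma~\ref{lemm:Qh}, but the execution differs in a way worth noting. The paper works in the representation ring: it writes $\C[t_1,\dots,t_n]=S^*V$ for the permutation module $V$, obtains $R(S^*V;q)R(\X;q)=(1-q)^{-n}R(\Y;q)$, and then invokes Haiman's plethystic identities \cite[Proposition~3.3]{haim02} to translate multiplication by $R(S^*V;q)$ (resp.\ $R(\Lambda^*V;-q)$) into the substitution $Z\mapsto Z/(1-q)$ (resp.\ $Z\mapsto (1-q)Z$). For part~(2) the paper passes through the exterior-power inverse $R(S^*V;q)R(\Lambda^*V;-q)=1$ rather than inverting the plethysm directly.

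Your route is more elementary and self-contained: by computing graded characters at each cycle type via Molien's formula and then expanding in the power-sum basis, you effectively rederive the special case of Haiman's identity that is needed here, avoiding the external citation. Your derivation of (2) from (1) by the inverse plethystic substitution $Z\mapsto(1-q)Z$ is also cleaner than the paper's detour through $\Lambda^*V$. Both approaches are equally valid; yours has the advantage of transparency, while the paper's makes the connection to the standard plethystic machinery explicit.
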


\begin{proof}
By Lemma~\ref{lemm:Qh}, we have
\begin{equation} \label{eq:XhYh}
\C[t_1,\dots,t_n]\otimes \X= \C[x_1,\dots,x_n]\otimes \Y
\end{equation}
as graded $\mathfrak{S}_n$-modules. Let $V$ be the defining representation $\C^n$ of $\mathfrak{S}_n$. Then 
\[
\C[t_1,\dots,t_n]= S^*V=\bigoplus_{k=0}^\infty S^kV\quad\text{as graded $\mathfrak{S}_n$-modules},
\] 
where $S^kV$ denotes the $k$-th symmetric product of $V$.  Therefore, taking $R(\ ;q)$ on the both sides of \eqref{eq:XhYh}, we obtain
\begin{equation} \label{eq:XhYh_R}
R(S^*V;q)R(\X;q)=(1-q)^{-n}R(\Y;q).
\end{equation}

(1) It follows from \eqref{eq:XhYh_R} that 
\begin{equation*} \label{eq:leftF}
\sum_{k=0}^\infty q^k F_{S^kV\otimes\X}(\z;q)=(1-q)^{-n}F_{\Y}(\z;q).
\end{equation*}
Here the left side above is $\fh[\frac{Z}{1-q};q]$ by \cite[Proposition 3.3.1]{haim02}, so (1) follows. 

(2) Since 
\[
R(S^*V;q)R(\Lambda^*V;-q)=1,
\]
where $\Lambda^*V=\bigoplus_{k=0}^n\Lambda^k V$ and $\Lambda^kV$ is the $k$-th exterior product of $V$, it follows from \eqref{eq:XhYh_R} that 
\[
R(\X;q)=(1-q)^{-n}R(\Lambda^*V;-q)R(\Y;q)
\]
and this implies 
\[
F_{\X}(\z;q)=(1-q)^{-n}\sum_{k=0}^\infty (-q)^kF_{\Lambda^kV\otimes \Y}(\z;q).
\]
Here the sum at the right hand side above is $\gh[(1-q)Z;q]$ by \cite[Proposition 3.3]{haim02}, so (2) follows. 
\end{proof}

\subsection{Chromatic symmetric functions} 
Let $G$ be a graph with vertex set $[n]$. Given a vertex coloring $\kappa\colon G\to \N$, where $\N$ denotes the set of positive integers, let $\asc{(\kappa)}$ denotes the number of edges $\{i,j\}$ of $G$ such that $\kappa(i)<\kappa(j)$ when $i<j$. Then the chromatic quasisymmetric function of $G$ is defined as 
\[
X_G(\z;q):=\sum_{\kappa\colon G\to \N,\, \text{proper}}z_{\kappa(1)}\cdots z_{\kappa(n)}q^{\asc(\kappa)}
\]
where the sum runs over all proper colorings of the graph $G$. Here, a coloring is called proper if there are no edges whose endpoints receive the same color. The function $X_G(\z;q)$ is quasisymmetric in $\z$ but if $G$ is a unit interval graph $G_h$ associated to $h$, then it is symmetric in $\z$, and we set 
\[
X_h(\z;q):=X_{G_h}(\z;q).
\]
\begin{coro} \label{coro:XGh}
$X_{h}(\z;q)=(q-1)^{-n}\gh[(q-1)Z;q].$
\end{coro}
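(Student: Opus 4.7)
The plan is to deduce the identity from the Brosnan--Chow formula \eqref{eq:intro_Xh} combined with Proposition~\ref{prop:1}(2), with the bridge being the standard involution $\omega\colon \Lambda_n \to \Lambda_n$ (defined by $\omega(e_n)=h_n$, equivalently $\omega(p_k) = (-1)^{k-1} p_k$).

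First I would observe that $\fh$ and $X_h$ differ only by a twist by the sign representation. Indeed, since $\X = H^*(\Xh)$ by \eqref{eq:HXh2}, the definition of $\fh$ gives $\fh(\z;q) = \sum_i \ch(H^{2i}(\Xh))\,q^i$, whereas \eqref{eq:intro_Xh} reads $X_h(\z;q) = \sum_i \ch(H^{2i}(\Xh) \otimes \mathbf{U})\,q^i$. Using the standard fact that tensoring an $\mathfrak{S}_n$-module with $\mathbf{U}$ corresponds under $\ch$ to the involution $\omega$, I conclude
\[
X_h(\z;q) = \omega\bigl(\fh(\z;q)\bigr),
\]
where $\omega$ is applied coefficient-wise in $q$.

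Next I would convert $\omega$ into a plethystic operation. For any $f \in \Lambda_n$ one has $\omega(f) = (-1)^n f[-Z]$; this is straightforwardly verified on the power-sum basis, since for $\lambda \vdash n$ the involution gives $\omega(p_\lambda) = (-1)^{n-\ell(\lambda)} p_\lambda$ while the plethysm gives $p_\lambda[-Z] = (-1)^{\ell(\lambda)} p_\lambda$. Applying this to $\fh$ (whose $q^i$-coefficients lie in $\Lambda_n$) and then invoking Proposition~\ref{prop:1}(2) with $Z$ replaced plethystically by $-Z$, I obtain
\begin{align*}
X_h(\z;q) &= (-1)^n \fh[-Z;q] \\
&= (-1)^n (1-q)^{-n}\,\gh[(q-1)Z;q] \\
&= (q-1)^{-n}\,\gh[(q-1)Z;q],
\end{align*}
where in the last step I absorbed the sign via $(-1)^n(1-q)^{-n}=(q-1)^{-n}$.

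No step is genuinely hard here; the argument is a short chain of identifications once \eqref{eq:intro_Xh} and Proposition~\ref{prop:1} are in hand. The one point requiring care is the interplay between $\omega$ and plethystic negation: the factor $(-1)^n$ arises because each graded piece of $\fh(\z;q)$ is a symmetric function of degree $n$, and $q$ must be treated as a scalar outside the plethystic substitution throughout the computation.
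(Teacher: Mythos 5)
Your argument is correct and is essentially the paper's own proof, just with the two elementary steps performed in the opposite order: the paper applies $\omega$ to both sides of Proposition~\ref{prop:1}(2) and then invokes the identity $\varphi[-Z]=(-1)^n\omega\varphi$ on the right-hand side, whereas you first convert $\omega\fh$ into $(-1)^n\fh[-Z]$ and then substitute $Z\mapsto -Z$ into Proposition~\ref{prop:1}(2); these are the same manipulation. The one point you gloss over slightly — that replacing $Z$ by $-Z$ inside the already-plethystically-modified expression $\gh[(1-q)Z;q]$ yields $\gh[(q-1)Z;q]$ — is justified by associativity of plethysm (equivalently, by the power-sum computation you sketch), so the proof stands.
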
 

\begin{proof}
Let $\omega$ be an involution on $\Lambda_n$ switching elementary symmetric functions $e_\lambda$ and complete symmetric functions $h_\lambda$ for any partition $\lambda$ of $n$ (see \cite{fult97} for details). 
Applying the involution $\omega$ to the both sides of (2) in Proposition~\ref{prop:1}, we obtain 
\[
\omega \fh(\z;q)=(1-q)^{-n}\omega \gh\left[(1-q)Z;q\right]. 
\]
Here the left hand side above is $X_{G_h}(\z;q)$ by the theorem of Brosnan-Chow \cite{br-ch18} while the right hand side above agrees with the right hand side in the corollary because $\varphi[-Z]=(-1)^n\omega \varphi[Z]$ for a homogeneous symmetric function $\varphi$ of degree $n$. 
\end{proof}

\subsection{Unicellular LLT polynomials}
The unicellular LLT polynomials is a subset of LLT polynomials indexed by tuples of skew shapes, such that each shape is a single box. But it can be defined as
\[
\LLT_h(\z;q)=\sum_{\kappa\colon G_h\to \N}z_{\kappa(1)}\cdots z_{\kappa(n)}q^{\asc(\kappa)}
\]
similarly to the chromatic symmetric functions $X_{h}(\z;q)$ (see \cite{ca-me17}, \cite{al-pa18}). The only difference is that we do not require that colorings $\kappa$ be proper for $\LLT_h(\z;q)$. 
The chromatic symmetric function $X_{h}(\z;q)$ has the following interpretation
\[
X_{h}(\z;q)=\omega\fh(\z;q)
\]
as mentioned in the proof of Corollary~\ref{coro:XGh}. Similarly, the unicellular LLT polynomial $\LLT_h(\z;q)$ has the following interpretation. 

\begin{prop} \label{prop:LLT}
$\LLT_h(\z;q)=\gh(\z;q)$.
\end{prop}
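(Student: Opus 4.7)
The plan is to combine the classical plethystic identity between unicellular LLT polynomials and chromatic quasisymmetric functions with Corollary~\ref{coro:XGh}. In the LLT literature (see \cite{ca-me17}, \cite{al-pa18}, and the exposition in \cite[Sections 6.6 and 6.7]{al-su22}), one has an identity of the form
\[
X_{h}(\z;q) \;=\; (q-1)^{-n}\, \LLT_{h}\bigl[(q-1)Z;\,q\bigr],
\]
which encodes combinatorially the difference between summing over all colorings of $G_h$ (as in $\LLT_{h}$) versus only proper colorings (as in $X_{h}$). This identity can itself be proved via a sign-reversing involution that groups non-proper colorings according to the partition induced by their monochromatic edges.

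Granted this identity, the proposition follows immediately. Corollary~\ref{coro:XGh} asserts that $X_{h}(\z;q)=(q-1)^{-n}\gh[(q-1)Z;q]$, so comparing the two expressions for $X_h$ yields
\[
\LLT_{h}\bigl[(q-1)Z;q\bigr] \;=\; \gh\bigl[(q-1)Z;q\bigr].
\]
The plethystic substitution $Z\mapsto (q-1)Z$ acts on the power-sum basis by $p_k\mapsto (q^k-1)\,p_k$, and is therefore an automorphism of the ring of symmetric functions with coefficients in $\Q(q)$ when $q$ is treated as a formal indeterminate. Inverting it gives $\LLT_{h}(\z;q)=\gh(\z;q)$, as desired.

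The main obstacle is pinning down the precise form of the plethystic identity above with the correct sign and $\omega$ conventions, since slightly different forms appear in different sources; once the right version is selected, everything reduces to the simple algebraic comparison described above. An alternative, more self-contained route would bypass the LLT literature by giving a direct combinatorial description of $\gh(\z;q)=F_{\Y}(\z;q)$ via a $\mathfrak{S}_n$-equivariant basis of $\Y=H^*_T(\Xh)/(x_1,\dots,x_n)$ indexed by colorings of $G_h$ carrying the weight $q^{\asc(\kappa)}$. Such an approach would essentially reprove the Carlsson--Mellit identity from the topological side and is likely strictly more laborious than the plethystic argument sketched above.
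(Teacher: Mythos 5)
Your argument is essentially identical to the paper's: the paper also cites the Carlsson--Mellit identity $X_h(\z;q)=(q-1)^{-n}\LLT_h[(q-1)Z;q]$ and combines it with Corollary~\ref{coro:XGh}. Your extra remark that the plethysm $Z\mapsto(q-1)Z$ is invertible (acting by $p_k\mapsto(q^k-1)p_k$) is a useful detail that the paper leaves implicit, but the substance and route are the same.
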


\begin{proof}
Carlson-Mellit \cite[Prop.3.4]{ca-me17} proves that 
\[
X_{h}(\z;q)=(q-1)^{-n}\LLT_h[(q-1)Z;q].
\]
This together with Corollary~\ref{coro:XGh} implies the proposition. 
\end{proof}

\section{Twin of regular semisimple Hessenberg variety} \label{sect:4}

The quotient ring $\X=H^*_T(\Xh)/(t_1,\dots,t_n)$ agrees with $H^*(\Xh)$, so it is natural to expect that $\Y=H^*_T(\Xh)/(x_1,\dots,x_n)$ is the cohomology ring of some $T$-manifold. In this section we observe that $\Y$ is indeed the cohomology ring of the twin of $\Xh$ introduced by Ayzenberg-Buchstaber \cite{ay-bu21}. %We thank Hiraku Abe for pointing out this fact.  

As is well-known, the flag variety $\Fln$ can be identified with the homogeneous space $\U(n)/T$. Indeed, to an element 
$g=[\v_1,\dots,\v_n]\in \U(n)$, we associate a complete flag 
\[
\left(\langle \v_1\rangle \subset \langle \v_1,\v_2\rangle \subset \cdots \subset \langle \v_1,\dots,\v_n\rangle\right)\in \Fln 
\]
where $\langle \ \rangle$ denotes the complex vector space generated by elements in $\langle\ \rangle$, and this correspondence induces the identification
\[
\Fln= \U(n)/T.
\] 
Through this identification, \eqref{eq:definition_Xh} can be written as
\begin{equation} \label{eq:Xh}
\Xh=\{ gT\in \U(n)/T\mid g^{-1}Sg\in M_h\},
\end{equation}
where the diagonal matrix $S$ with distinct diagonal entries is assumed to be real and $M_h$ denotes the vector space of all Hermitian matrices with vanishing $(i,j)$ entries for $i>h(j)$ or $j>h(i)$ (note that since $S$ is assumed to be a real matrix, $g^{-1}Sg$ is a Hermitian matrix).

\subsection{Twin of $\Xh$} 
The twin of $\Xh$, denoted by $\Yh$, is defined as
\begin{equation} \label{eq:Yh}
\Yh:=\{gT\in \U(n)/T\mid gSg^{-1}\in M_h\}. 
\end{equation}

\begin{lemm}[\cite{ay-bu21}]
The twin $\Yh$ is a smooth manifold. 
\end{lemm}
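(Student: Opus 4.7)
My plan is to realise $\Yh$ as a transverse preimage in $\U(n)/T$ and reduce the smoothness verification to a purely algebraic statement about centralisers of Hessenberg-shape Hermitian matrices.

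Because $T$ centralises $S$, the map
\[
\varphi\colon \U(n)/T \longrightarrow \mathrm{Herm}(n),\qquad gT\longmapsto gSg^{-1},
\]
is well defined, smooth, and $T$-equivariant (with $T$ acting on $\mathrm{Herm}(n)$ by conjugation). The distinctness of the eigenvalues of $S$ makes $\varphi$ injective, and its image is the isospectral adjoint orbit $\mathcal{O}_S=\{gSg^{-1}:g\in\U(n)\}$; by $\U(n)$-equivariance and a differential computation at $eT$, $\varphi$ is in fact a diffeomorphism onto $\mathcal{O}_S$. By \eqref{eq:Yh}, $\Yh=\varphi^{-1}(M_h)$, so showing that $\Yh$ is a smooth submanifold of $\U(n)/T$ is equivalent to showing that $\mathcal{O}_S$ meets the linear subspace $M_h$ transversely inside $\mathrm{Herm}(n)$, i.e.\
\[
T_A\mathcal{O}_S+M_h=\mathrm{Herm}(n) \qquad \text{for every } A\in\mathcal{O}_S\cap M_h.
\]

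Passing to orthogonal complements with respect to the real trace pairing on $\mathrm{Herm}(n)$, the tangent space $T_A\mathcal{O}_S=[\mathfrak u(n),A]$ has orthogonal the Hermitian centraliser $C_A=\{B\in\mathrm{Herm}(n):[B,A]=0\}=\R[A]_{<n}$, which is $n$-dimensional because $A$ has $n$ distinct eigenvalues. The orthogonal of $M_h$ is $M_h^\perp=\{B\in\mathrm{Herm}(n):B_{ij}=0\text{ for } i\le h(j)\}$, the space of Hermitian matrices supported strictly outside the Hessenberg shape. Transversality is therefore equivalent to the algebraic statement
\[
C_A\cap M_h^\perp=0 \qquad\text{for every } A\in\mathcal{O}_S\cap M_h;
\]
equivalently, for every $p\in\R[t]$ with $\deg p<n$, if $p(A)$ is supported only in positions $(i,j)$ with $i>h(j)$ then $p=0$.

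I would verify this algebraic claim in two stages. At a torus-fixed point $A=wSw^{-1}$ with $w\in\mathfrak S_n$, the matrix $A$ is diagonal, so $p(A)$ is also diagonal; combined with $p(A)\in M_h^\perp$, which has no diagonal entries, this forces $p(\lambda_k)=0$ at each of the $n$ distinct eigenvalues of $A$, hence $p=0$. For a general $A\in\mathcal{O}_S\cap M_h$, the Hessenberg shape of $A$ together with simplicity of the spectrum guarantees a cyclic vector adapted to the Hessenberg pattern (for example, a suitable standard basis vector $e_k$ for which $\{A^\ell e_k\}_{\ell=0}^{n-1}$ spans $\C^n$); pairing the conditions $p(A)_{ij}=0$ for $(i,j)$ in the Hessenberg shape against this cyclic vector and its iterates under $A$ converts the vanishing requirements into an upper-triangular linear system in the coefficients of $p$ with nonzero diagonal, forcing $p=0$.

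The main obstacle is precisely the non-fixed-point case. Transversality at the fixed locus is a one-line computation, but extending it to an arbitrary $A$ requires genuinely using the interplay between the Hessenberg shape of $A$ and the cyclic structure of its action on $\C^n$. The key technical input is the existence of a Hessenberg-adapted cyclic vector for every simple-spectrum Hermitian $A\in M_h$; once available, the resulting triangular system eliminates the coefficients of $p$ one at a time and closes the argument.
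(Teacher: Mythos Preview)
Your transversality reformulation is sound: via the diffeomorphism $\varphi$ onto the isospectral orbit $\mathcal{O}_S\subset\mathrm{Herm}(n)$, smoothness of $\Yh$ amounts to $C_A\cap M_h^\perp=0$ for every $A\in\mathcal{O}_S\cap M_h$, and this statement is in fact true. But the mechanism you propose for the non-fixed case has a genuine gap. The assertion that some standard basis vector $e_k$ is cyclic for $A$ fails whenever $A$ happens to be block-diagonal: already for $h=(2,3,\dots,n,n)$, a tridiagonal Hermitian $A$ with one vanishing superdiagonal entry can have simple spectrum (e.g.\ $A=(2)\oplus\begin{smallmatrix}0&1\\1&0\end{smallmatrix}$ in the $3\times 3$ case), yet it splits as $A_1\oplus A_2$ and no $e_k$ generates $\C^n$ under iteration by $A$. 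Your ``upper-triangular linear system'' never gets started at such points, and you offer no alternative. A correct direct argument exists---this is essentially the Toda-flow proof alluded to in \cite{ay-bu21}, and via the inversion below it is equivalent to the De Mari--Procesi--Shayman smoothness of $\Xh$---but it is not the one you have sketched.

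The paper bypasses the transversality computation entirely. It lifts $\Xh$ to $\Zh:=p^{-1}(\Xh)\subset \U(n)$ along the principal $T$-bundle $p\colon \U(n)\to\U(n)/T$, applies the inversion diffeomorphism $\iota(g)=g^{-1}$ to obtain $\iota(\Zh)=\{g\in\U(n):gSg^{-1}\in M_h\}$, and observes that the free right-$T$-quotient of this set is exactly $\Yh$. Since $\Xh$ is smooth by Proposition~\ref{prop:Xh_property}, so is $\Zh$, hence $\iota(\Zh)$, hence $\Yh$. Thus the paper reduces the lemma to a two-line diffeomorphism argument resting on an already-established result, whereas your route, even if the gap were filled, would amount to reproving that result from scratch.
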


\begin{proof} 
This is proved in \cite{ay-bu21} in two ways. One uses Toda flow. We shall give the other proof for the reader's convenience. Let $p\colon \U(n)\to \U(n)/T$ be the quotient map and consider
\[
\Zh:=p^{-1}(\Xh).
\]
We see from \eqref{eq:Xh} and \eqref{eq:Yh} that if $\iota\colon \U(n)\to \U(n)$ denotes the diffeomorphism sending $g$ to $g^{-1}$, then $\Yh=\iota(\Zh)/T$. Therefore, it suffices to see that $\Zh$ is a smooth manifold but it is so because $\Xh$ is a smooth submanifold of $\U(n)/T$ and $p\colon \U(n)\to \U(n)/T$ is a principal $T$-bundle. 
\end{proof}

\begin{rema}
As pointed out in \cite{ay-bu21}, the twin $\Yh$ may be regarded as the quotient $T\backslash \Zh$ of $\Zh$ by the \emph{left} $T$-action on $\U(n)$. Indeed, the diffeomorphism $\iota$ above induces a diffeomorphism from $T\backslash \U(n)$ to $\U(n)/T$ and it maps $T\backslash \Zh$ onto $\iota(\Zh)/T=\Yh$. 
\end{rema}

Apparently, the twins $\Xh$ and $\Yh$ look similar. Indeed, both have vanishing odd degree cohomology and have the same Betti numbers. However, their cohomology rings are not isomorphic in general, see \cite[Example 3.12]{ay-bu21}. 

It follows from the definition \eqref{eq:Yh} that $\Yh$ is invariant under the left $T$-action on $\U(n)/T$. Similarly to $\Xh$, we have 
\[
\Yh^T=\mathfrak{S}_n
\]
and since $H^{odd}(\Yh)=0$ (\cite{ay-bu21}), the restriction map 
\[
H^*_T(\Yh)\hookrightarrow H^*_T(\Yh^T)=\bigoplus_{w\in\mathfrak{S}_n}H^*(BT)=\Map(\mathfrak{S}_n,\C[t_1,\dots,t_n])
\]
is injective. The following description of $H^*_T(\Yh)$ is slightly different from Proposition~\ref{prop:equivariant_cohomology_Xh} for $H^*_T(\Xh)$. 

\begin{prop}[\cite{ay-bu21}] \label{prop:equivariant_cohomology_Yh}
Let $f\in \Map(\mathfrak{S}_n,\C[t_1,\dots,t_n])$. Then $f\in H^*_T(\Yh)$ if and only if 
\[
f(v)\equiv f(w)\pmod{t_{i}-t_{j}} \quad \text{whenever $v=w(i,j)$ for $j<i\le h(j)$}. 
\]
\end{prop}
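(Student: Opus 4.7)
The argument follows the same GKM / equivariant localization template used for Proposition~\ref{prop:equivariant_cohomology_Xh}. Since $H^{\mathrm{odd}}(\Yh)=0$ (cited from \cite{ay-bu21}), the restriction map $H^*_T(\Yh)\hookrightarrow\Map(\mathfrak S_n,\C[t_1,\dots,t_n])$ is injective, and its image is characterized by the GKM congruence $f(v)\equiv f(w)\pmod{\alpha}$ at every $1$-dimensional $T$-orbit joining fixed points $v,w$ with $T$-weight $\alpha$. The whole task reduces to computing the GKM graph of $\Yh$, i.e., the $1$-dim orbits and their weights.

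I would carry this out in the equivalent model $\Yh\cong T\backslash\Zh$ provided by the diffeomorphism $\iota\colon g\mapsto g^{-1}$, which intertwines the left $T$-action on $\Yh$ with the right $T$-action on $T\backslash\Zh$ (the two differ only by the involution $s\mapsto s^{-1}$ on $T$, immaterial for GKM congruences). At a fixed point $Tw$, parametrize a neighborhood as $Tw\exp(X)$ with $X\in\mathfrak u(n)/\mathfrak t$, set $g=w\exp(X)$, and expand the defining relation $g^{-1}Sg\in M_h$ to first order; the off-diagonal $(i,j)$-entry of $g^{-1}Sg$ is $(s_{w(i)}-s_{w(j)})X_{ij}$, so, since the $s_i$ are distinct and the ambient condition forces this entry to vanish when $i>h(j)$ or $j>h(i)$, the tangent directions of $T\backslash\Zh$ at $Tw$ are exactly the $X_{ij}$ with $j<i\le h(j)$, independently of $w$. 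A short computation gives the right $T$-action on $Tw\exp(X)$ as $X\mapsto\mathrm{Ad}_{s^{-1}}(X)$, hence the weight on $X_{ij}$ is $\pm(t_i-t_j)$, also $w$-independent; and a standard flag-variety limit identifies the $1$-dim $T$-orbit through $Tw$ in direction $X_{ij}$ as the $\C P^1$ joining $Tw$ to $Tw(i,j)$. Under the identification $Tw\leftrightarrow w$, the GKM graph of $\Yh$ has an edge between $w$ and $w(i,j)$ of weight $t_i-t_j$ for every $(i,j)$ with $j<i\le h(j)$, which is exactly the congruence of Proposition~\ref{prop:equivariant_cohomology_Yh}.

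The main obstacle is picking the right presentation of $\Yh$: a direct tangent analysis inside $\U(n)/T$ (using $gT=w\exp(X)T$ with $gSg^{-1}\in M_h$) produces instead the $w$-twisted condition ``$w(i)\le h(w(j))$ and $w(j)\le h(w(i))$'' with weight $t_{w(i)}-t_{w(j)}$, in close formal parallel to Proposition~\ref{prop:equivariant_cohomology_Xh} but not in the form stated here. Passing to the $T\backslash\Zh$-model (equivalently, labeling $\Yh^T$ by $w^{-1}T\leftrightarrow w$) absorbs the outer conjugation by $w$ into the left-$T$ quotient, which strips the $w$-dependence from both the tangent condition and the weight, yielding the clean description of Proposition~\ref{prop:equivariant_cohomology_Yh}.
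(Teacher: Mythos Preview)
The paper does not give its own proof of this proposition; it is simply cited from \cite{ay-bu21}. Your proposal supplies a correct GKM-type argument and is essentially what one expects such a proof to look like. The tangent/weight computation in the $T\backslash\Zh$ model is right: with $g=w\exp(X)$ one gets $(g^{-1}Sg)_{ij}\approx (s_{w(i)}-s_{w(j)})X_{ij}$, so the surviving directions are $X_{ij}$ with $j<i\le h(j)$ independently of $w$, the right $T$-action gives weight $\pm(t_i-t_j)$, and the invariant $\mathbb{CP}^1$ joins $Tw$ to $Tw(i,j)$.

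Your final paragraph isolates the one genuine subtlety, and it is worth emphasizing. If one uses the naive labeling $wT\leftrightarrow w$ of fixed points inside $\Yh\subset\U(n)/T$ and computes directly there, the GKM congruences come out in the left-multiplication form
\[
f\big((k,l)w\big)\equiv f(w)\pmod{t_k-t_l}\qquad (l<k\le h(l)),
\]
not in the right-multiplication form stated in Proposition~\ref{prop:equivariant_cohomology_Yh}. Passing to the $T\backslash\Zh$ model with the labeling $Tw\leftrightarrow w$ (equivalently, relabeling the fixed points of $\Yh$ by $w^{-1}T\leftrightarrow w$) converts one description into the other, since the diffeomorphism $Tg\mapsto g^{-1}T$ sends $Tw(i,j)$ to $(i,j)w^{-1}T$. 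This relabeling is exactly what makes the subsequent formulas in the paper (the isomorphism $\xi$ with $\xi(t_i)=x_i$, the dagger action fixing $t_i$, etc.) come out as written, but the paper does not spell it out; your observation is a real clarification rather than a side remark.
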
 

As noticed in \cite{ay-bu21}, $H^*_T(\Xh)$ and $H^*_T(\Yh)$ are isomorphic as \emph{rings}. Indeed, the automorphism $\xi$ of $\Map(\mathfrak{S}_n,\C[t_1,\dots,t_n])$ defined by
\begin{equation} \label{eq:xi}
(\xi(f))(w):=w(f(w))
\end{equation}
induces a ring isomorphism 
\begin{equation} \label{eq:xi_iso}
\xi\colon H^*_T(\Yh)\xrightarrow{\cong} H^*_T(\Xh)
\end{equation}
by Propositions~\ref{prop:equivariant_cohomology_Xh} and~\ref{prop:equivariant_cohomology_Yh}. 
Since $(\xi(t_i))(w)=w(t_i(w))=w(t_i)=t_{w(i)}$, we have 
\begin{equation} \label{eq:x_i2}
\xi(t_i)=x_i
\end{equation}
by \eqref{eq:x_i}. 

We have the dot action on $\Map(\mathfrak{S}_n,\C[t_1,\dots,t_n])$. 
We consider another action of $\mathfrak{S}_n$ on $\Map(\mathfrak{S}_n,\C[t_1,\dots,t_n])$ defined as
\begin{equation} \label{eq:dagger}
(\sigma\dagger f)(w):=f(\sigma^{-1}w),
\end{equation}
which we call dagger action. 
Then we have 
\begin{equation} \label{eq:dagger_dot}
\xi(\sigma\dagger f)=\sigma\cdot \xi(f)
\end{equation}
because
\[
\begin{split}
(\xi(\sigma\dagger f))(w)&=w((\sigma\dagger f)(w))=w(f(\sigma^{-1}w))=\sigma\sigma^{-1}w(f(\sigma^{-1}w))\\
&=\sigma(\xi(f)(\sigma^{-1}w))=(\sigma\cdot\xi(f))(w).
\end{split}
\]
Moreover, the congruence relation in Proposition~\ref{prop:equivariant_cohomology_Yh} is preserved under the dagger action. Therefore, $H^*_T(\Yh)$ has the dagger action. Since each $t_i$ is fixed under the dagger action, the dagger action makes 
\[
H^*(\Yh)=H^*_T(\Yh)/(t_1,\dots,t_n)
\]
a graded $\mathfrak{S}_n$-module. With this understanding, we have the following. 

\begin{prop} \label{prop:L_Yh}
$H^*(\Yh)$ (with the dagger action of $\mathfrak{S}_n$) is isomorphic to $\Y$ defined in \eqref{eq:definition_L} as graded rings and also as graded $\mathfrak{S}_n$-modules. Therefore, 
\[
R(H^*_T(\Xh);q)=\frac{1}{(1-q)^n}R(H^*(\Yh);q)
\]
by \eqref{eq:XhYh} and 
\[
\sum_{i=0}^\infty {\rm ch}(H^{2i}(\Yh))q^i=\LLT_h(\z;q)
\]
by Proposition~\ref{prop:LLT}. 
\end{prop}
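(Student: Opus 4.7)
The plan is to transport the ring and $\mathfrak{S}_n$-module structure on $\Y$ across the isomorphism $\xi$ of \eqref{eq:xi}. Two facts are already in place: first, $\xi$ restricts to a graded ring isomorphism $H^*_T(\Yh) \xrightarrow{\cong} H^*_T(\Xh)$ by \eqref{eq:xi_iso}; second, $\xi(t_i) = x_i$ by \eqref{eq:x_i2}. Thus $\xi$ carries the ideal $(t_1, \ldots, t_n) \subset H^*_T(\Yh)$ bijectively onto $(x_1, \ldots, x_n) \subset H^*_T(\Xh)$, and it descends to a graded ring isomorphism
\[
H^*(\Yh) = H^*_T(\Yh)/(t_1, \ldots, t_n) \xrightarrow{\cong} H^*_T(\Xh)/(x_1, \ldots, x_n) = \Y.
\]
To upgrade this to an isomorphism of graded $\mathfrak{S}_n$-modules, I would invoke \eqref{eq:dagger_dot}, which says that $\xi$ intertwines the dagger action with the dot action. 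Since the dagger action fixes each $t_i$ and the dot action fixes each $x_i$ (cf.\ \eqref{eq:dot_action_on_t_and_x}), both actions descend to the respective quotients, and the intertwining passes through as well; this gives the $\mathfrak{S}_n$-equivariance for free.

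The two displayed consequences are then purely formal. The second presentation of Lemma~\ref{lemm:Qh}, namely $H^*_T(\Xh) \cong \C[x_1, \ldots, x_n] \otimes \Y$ as graded $\mathfrak{S}_n$-modules with trivial action on the polynomial factor, combined with $\Y \cong H^*(\Yh)$ just established, yields the first identity upon taking $R(\,\cdot\,;q)$, since $R(\C[x_1,\ldots,x_n];q) = (1-q)^{-n}$ times the trivial representation. For the second identity, applying $\ch$ degreewise to the graded $\mathfrak{S}_n$-isomorphism $\Y \cong H^*(\Yh)$ gives $F_{\Y}(\z;q) = \sum_i \ch(H^{2i}(\Yh))\, q^i$, and Proposition~\ref{prop:LLT} identifies the left-hand side with $\LLT_h(\z;q)$.

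No step poses a genuine obstacle: the heavy lifting lies in \eqref{eq:xi_iso} and \eqref{eq:dagger_dot}, both already available. The only subtlety I would pause to verify is that the dagger action really preserves the GKM subring $H^*_T(\Yh) \subset \Map(\mathfrak{S}_n, \C[t_1,\ldots,t_n])$, so that the descended dagger action on $H^*(\Yh)$ is well defined. This is immediate, however, because the congruence relation in Proposition~\ref{prop:equivariant_cohomology_Yh} is stated using fixed differences $t_i - t_j$ that do not depend on $w$, and is therefore visibly invariant under left translation of the argument of $f$, which is precisely what the dagger action performs.
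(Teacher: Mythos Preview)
Your proof is correct and follows essentially the same approach as the paper's own proof, which is the one-line observation that ``the automorphism $\xi$ induces the desired isomorphism by \eqref{eq:xi_iso}, \eqref{eq:x_i2}, and \eqref{eq:dagger_dot}.'' You have simply spelled out explicitly what that sentence encodes, including the passage to quotients and the derivation of the two displayed consequences; the additional verification that the dagger action preserves $H^*_T(\Yh)$ is already handled in the paragraph preceding the proposition.
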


\begin{proof}
The automorphism $\xi$ induces the desired isomorphism by \eqref{eq:xi_iso}, \eqref{eq:x_i2}, and \eqref{eq:dagger_dot}. 
\end{proof}

\section{Palindromicity and Poincar\'e duality} \label{sect:5}

Let $|h|=\sum_{j=1}^{n}(h(j)-j)$. Then $X_{h}(\z;q)$ and $\LLT_{h}(\z;q)$ are polynomials in $q$ of degree $|h|$ and it is known that they satisfy the following palindromicity:
\begin{equation} \label{eq:palindromicity}
q^{|h|}X_{h}(\z;q^{-1})=X_{h}(\z;q)\quad\text{and}\quad q^{|h|}\LLT_h(\z;q^{-1})=\omega \LLT_h(\z;q)
\end{equation}
(see \cite[Proposition 2.6]{sh-wa16} for the former and \cite[Lemma 4.1]{al-pa18} for the latter). 
In this section we observe that the palindromicity \eqref{eq:palindromicity} is a consequence of the Poincar\'e duality for $\Xh$. 

We treat the former identity in \eqref{eq:palindromicity} first. Remember that 
\[
R(H^*(\Xh);q)=\sum_{i=0}^{|h|}H^{2i}(\Xh)q^i
\]
and 
\[
\fh(\z;q)=\sum_{i=0}^{|h|}\ch(H^{2i}(\Xh))q^i,\quad \quad X_h(\z;q)=\omega \fh(\z;q). 
\]
Therefore, the following proposition is equivalent to the former identity in \eqref{eq:palindromicity}. 

\begin{prop} \label{prop:palindromicity_Xh}
$q^{|h|}R(H^*(\Xh);q^{-1})=R(H^*(\Xh);q)$, 
namely $H^{2k}(\Xh)$ is isomorphic to $H^{2(|h|-k)}(\Xh)$ as $\mathfrak{S}_n$-modules for $0\le k\le |h|$. 
\end{prop}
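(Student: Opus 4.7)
The plan is to realize the Poincar\'e duality pairing on $H^*(\Xh)$ through Atiyah-Bott localization in equivariant cohomology and check, in this guise, that it is $\mathfrak{S}_n$-invariant with respect to the dot action. By Lemma~\ref{lemm:Qh} I would first fix an $\mathfrak{S}_n$-equivariant graded splitting $s\colon H^*(\Xh)\hookrightarrow H^*_T(\Xh)$ of the quotient map $H^*_T(\Xh)\to \X=H^*(\Xh)$, so that any $\alpha\in H^{2k}(\Xh)$ has an $\mathfrak{S}_n$-equivariant lift $\tilde\alpha:=s(\alpha)\in H^{2k}_T(\Xh)$, and similarly for $\beta\in H^{2(|h|-k)}(\Xh)$.

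Viewing $H^*_T(\Xh)$ inside $\Map(\mathfrak{S}_n,\C[t_1,\dots,t_n])$ via Proposition~\ref{prop:equivariant_cohomology_Xh}, I would apply the Atiyah-Bott localization formula to write
\[
\int_{\Xh}^T \tilde\alpha\cup\tilde\beta=\sum_{w\in\mathfrak{S}_n}\frac{\tilde\alpha(w)\,\tilde\beta(w)}{e_w(t)},\qquad e_w(t):=\prod_{j<i\le h(j)}\bigl(t_{w(i)}-t_{w(j)}\bigr),
\]
where $e_w(t)$ is the $T$-equivariant Euler class of the tangent space of $\Xh$ at the fixed point $w\in\mathfrak{S}_n$. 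When $\tilde\alpha\in H^{2k}_T(\Xh)$ and $\tilde\beta\in H^{2(|h|-k)}_T(\Xh)$, numerator and denominator at each $w$ have matching polynomial degree $|h|$, so the sum is a scalar in $H^0(BT)=\C$; reducing the equivariant integration modulo $(t_1,\dots,t_n)$ identifies this scalar with the ordinary Poincar\'e duality pairing $\int_{\Xh}\alpha\cup\beta$.

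The key computation is to verify the $\mathfrak{S}_n$-equivariance of this localization formula. Substituting the definition \eqref{eq:dot_action} of the dot action and reindexing by $v=\sigma^{-1}w$, the pairing of $\sigma\cdot\tilde\alpha$ with $\sigma\cdot\tilde\beta$ becomes $\sum_{v}\sigma\bigl(\tilde\alpha(v)\tilde\beta(v)\bigr)/e_{\sigma v}(t)$. The definition of $e_v(t)$ together with the identity $\sigma\cdot t_i=t_{\sigma(i)}$ from \eqref{eq:dot_action_on_t_and_x} gives the key relation $e_{\sigma v}(t)=\sigma(e_v(t))$, so this sum equals $\sigma\cdot\bigl(\sum_{v}\tilde\alpha(v)\tilde\beta(v)/e_v(t)\bigr)$. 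Since the pairing value is a scalar, $\sigma$ acts trivially on it, and the pairing is $\mathfrak{S}_n$-invariant.

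Putting everything together, the Poincar\'e duality pairing $H^{2k}(\Xh)\times H^{2(|h|-k)}(\Xh)\to\C$ is non-degenerate (by ordinary Poincar\'e duality on the smooth compact manifold $\Xh$ of complex dimension $|h|$) and $\mathfrak{S}_n$-invariant under the dot action, so it induces an $\mathfrak{S}_n$-equivariant isomorphism $H^{2k}(\Xh)\cong H^{2(|h|-k)}(\Xh)^{*}$, and self-duality of finite-dimensional $\C[\mathfrak{S}_n]$-modules yields the desired $\mathfrak{S}_n$-equivariant isomorphism $H^{2k}(\Xh)\cong H^{2(|h|-k)}(\Xh)$. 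The hardest part will be nailing down the tangent weights at the $T$-fixed points with the correct form of $e_w(t)$ and verifying cleanly that the equivariant integral reduces modulo $(t_1,\dots,t_n)$ to the ordinary Poincar\'e pairing; once those points are secured, the change-of-variables establishing $\mathfrak{S}_n$-equivariance is routine.
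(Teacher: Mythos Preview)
Your proposal is correct and follows essentially the same route as the paper: both use the Atiyah--Bott localization formula with the explicit tangent weights $e_w(t)=\prod_{j<i\le h(j)}(t_{w(i)}-t_{w(j)})$, verify the key identity $e_{\sigma v}(t)=\sigma(e_v(t))$ to obtain $\mathfrak{S}_n$-invariance of the Poincar\'e pairing, and then invoke self-duality of $\mathfrak{S}_n$-modules. The only cosmetic difference is that you lift classes via the equivariant splitting of Lemma~\ref{lemm:Qh}, whereas the paper phrases the same reduction by noting that the equivariant Gysin map $p_!^T$ is an $H^*(BT)$-module homomorphism and hence descends modulo $(t_1,\dots,t_n)$.
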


\begin{proof}
Let $p\colon \Xh\to pt$ be the collapsing map to a point $pt$. Since $\Xh$ is a smooth projective variety, it has the canonical orientation induced from the complex structure and with this orientation, let 
\begin{equation} \label{eq:9-0}
p_!^{T}\colon H^*_{T}(\Xh)\to H^{*-2|h|}_{T}(pt)=H^{*-2|h|}(BT)
\end{equation}
be the equivariant Gysin homomorphism induced from the collapsing map $p$. As is well-known, $p_!^T$ is an $H^*(BT)$-module map and since $X(h)^T=\mathfrak{S}_n$, the Atiyah-Bott localization formula says that 
\begin{equation} \label{eq:9-1}
p_!^{T}(f)=\sum_{w\in \mathfrak{S}_n}\frac{f(w)}{e^T(w)}
\end{equation}
where $f(w)$ is the restriction of $f\in H^*_T(\Xh)$ to $w$ and $e^T(w)$ denotes the equivariant Euler class of the tangential $T$-module $\tau_w\Xh$ of $\Xh$ at $w$. Indeed, it is known that  
\[
\tau_w\Xh=\bigoplus_{j<i\le h(j)}\C(\pi_{w(i)}\pi_{w(j)}^{-1})\quad\text{as complex $\mathfrak{S}_n$-modules}
\] 
(in fact, Proposition~\ref{prop:equivariant_cohomology_Xh} follows from this fact), where $\pi_k\colon T\to S^1$ denotes the projection to the $(k,k)$-entry of the diagonal torus $T$ as before, so the equivariant Euler class of $\tau_w\Xh$ is given by 
\begin{equation} \label{eq:9-2}
e^T(w)=\prod_{j< i\le h(j)}(t_{w(i)}-t_{w(j)}).
\end{equation}

Since $p_!^T$ is an $H^*(BT)$-module homomorphism and 
\[
H^*(\Xh)=H^*_T(\Xh)/(t_1,\dots,t_n),
\]
$p_!^T$ induces an ordinary Gysin homomorphism 
\[
p_!\colon H^*(\Xh)\to H^{*-2|h|}(BT)/(t_1,\dots,t_n).
\]
This homomorphism is trivial unless $*=2|h|$. When $*=2|h|$, $p_!$ is nothing but the evaluation on the fundamental class of $\Xh$. Therefore, the bilinear form 
\[
\langle\ ,\ \rangle\colon H^{2k}(\Xh)\times H^{2|h|-2k}(\Xh)\to \C=H^0(BT)
\]
defined by 
\begin{equation} \label{eq:bilinear_form}
\langle f,g\rangle :=p_!(fg)
\end{equation}
is non-degenerate by Poincar\'e duality. 

\medskip
\noindent
{\bf Claim.} 
The non-degenerate bilinear form $\langle \ ,\ \rangle$ is $\mathfrak{S}_n$-invariant.

\medskip
\noindent
Indeed, for $\sigma\in \mathfrak{S}_n$, it follows from \eqref{eq:9-1} and \eqref{eq:9-2} that 
\begin{equation*}
\begin{split}
p_!^{T}(\sigma\cdot f)&=\sum_{w\in \mathfrak{S}_n}\frac{(\sigma\cdot f)(w)}{e^T(w)}=\sum_{w\in \mathfrak{S}_n}\frac{\sigma(f({\sigma^{-1}w}))}{e^T(w)}\\
&=\sum_{v\in \mathfrak{S}_n}\frac{\sigma(f(v))}{e^T({\sigma v})}=\sum_{v\in \mathfrak{S}_n}\frac{\sigma(f(v))}{\sigma(e^T(v))}
=\sigma\left(\sum_{v\in \mathfrak{S}_n}\frac{f(v)}{e^T(v)}\right)\\
&=\sigma(p_!^T(f)).
\end{split}
\end{equation*}
Therefore, the bilinear form $\langle \ , \ \rangle$ is $\mathfrak{S}_n$-invariant since $(\sigma \cdot f)(\sigma\cdot g)=\sigma\cdot(fg)$ and the $\mathfrak{S}_n$-action on $H^0(BT)=\Z$ is trivial, proving the claim. 

By the claim, $H^{2k}(\Xh)$ is isomorphic to the dual of $H^{2(|h|-k)}(\Xh)$ as $\mathfrak{S}_n$-modules. However, any $\mathfrak{S}_n$-module is isomorphic to its dual, so the proposition follows. 
\end{proof}

To prove the latter identity in \eqref{eq:palindromicity}, we also use the equivariant cohomology $H^*_T(\Xh)$. Since $H^*_T(\Xh)=H^*(BT)\otimes H^*(\Xh)$ as graded $\mathfrak{S}_n$-modules, we have 
\begin{equation} \label{eq:RHTXh}
R(H^*_T(\Xh);q)=R(H^*(BT);q)R(H^*(\Xh);q).
\end{equation}
Here, $H^*(BT)=\C[t_1,\dots,t_n]$ and $\mathfrak{S}_n$ permutes the indeterminates $t_1,\dots,t_n$, so the fundamental theorem of symmetric polynomials says that 
\begin{equation} \label{eq:HBT_invariant}
H^*(BT)^{\mathfrak{S}_n}=\C[e_1(t),\dots,e_n(t)],
\end{equation}
where $e_i(t)$ for $i=1,\dots,n$ denotes the $i$-th elementary symmetric polynomial in $t_1,\dots,t_n$. Moreover, $H^*(BT)$ is a free module over $H^*(BT)^{\mathfrak{S}_n}$ and 
\begin{equation} \label{eq:HBT}
H^*(BT)=H^*(BT)^{\mathfrak{S}_n}\otimes \Big(\C[t_1,\dots,t_n]/(e_1(t),\dots,e_n(t))\Big) 
\end{equation}
as graded $\mathfrak{S}_n$-modules. 

The factor in \eqref{eq:HBT}, which we denoted by 
\begin{equation} \label{eq:Nn}
N_n:=\C[t_1,\dots,t_n]/(e_1(t),\dots,e_n(t)),
\end{equation}
is of the same form as the cohomology ring of the flag variety $\Fln$:
\begin{equation} \label{eq:cohomology_flag}
H^*(\Fln)=\C[x_1,\dots,x_n]/(e_1(x),\dots,e_n(x))
\end{equation}
where $e_i(x)$ for $i=1,\dots,n$ denotes the $i$-th elementary symmetric polynomial in $x_1,\dots,x_n$. 
The dot action on $H^*(\Fln)$ is trivial (because so is on $x_i$'s) while $\mathfrak{S}_n$ permutes $t_i$'s as remarked above. Instead of the dot action, we consider the star action defined as 
\[
(\sigma*f)(w):=f(w\sigma)\quad\text{for $\sigma\in \mathfrak{S}_n$ and $f\in\Map(\mathfrak{S}_n,\C[t_1,\dots,t_n])$}.
\]
It follows from Proposition~\ref{prop:equivariant_cohomology_Xh} that the star action preserves the subring $H^*_T(\Fln)$. It also follows from \eqref{eq:x_i} and \eqref{eq:dot_action} that 
\begin{equation} \label{eq:star_action_on_t_and_x}
\sigma* t_i=t_{i},\quad \sigma* x_i=x_{\sigma(i)}\qquad \text{for $\sigma\in\mathfrak{S}_n$}.
\end{equation}
Therefore, the star action descends to $H^*(\Fln)$ by \eqref{eq:cohomology_flag} and $H^*(\Fln)$ with the star $\mathfrak{S}_n$-action is isomorphic to $N_n$ with the $\mathfrak{S}_n$-action permuting $t_i$'s as $\mathfrak{S}_n$-modules. 

With this understanding, we prove the following. 

\begin{lemm} \label{lemm:palindromicity}
Let $\mathbf{U}$ be the %sign representation of $\mathfrak{S}_n$. 
alternating complex $1$-dimensional $\mathfrak{S}_n$-module. 
Then 
\begin{enumerate}
\item $q^{n(n-1)/2}R(N_n;q^{-1})=R(N_n;q)\otimes\mathbf{U}$,
\item $R(H^*(BT);q^{-1})=(-q)^nR(H^*(BT);q)\otimes\mathbf{U}$.
\end{enumerate} 
\end{lemm}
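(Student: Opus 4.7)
The plan is to establish (1) by identifying $N_n$ with the classical coinvariant algebra of $\mathfrak{S}_n$ and invoking its $\mathfrak{S}_n$-equivariant Poincar\'e duality, and then to derive (2) by a short Hilbert-series manipulation combining (1) with the factorization \eqref{eq:HBT}.

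For (1), I would rely on the standard fact that $N_n=\C[t_1,\dots,t_n]/(e_1(t),\dots,e_n(t))$ is a Poincar\'e duality algebra whose socle lies in algebraic degree $n(n-1)/2$ and is spanned by the Vandermonde $\Delta=\prod_{1\le i<j\le n}(t_j-t_i)$. Since permuting the $t_i$'s by $\sigma$ multiplies $\Delta$ by $\sgn(\sigma)$, this socle is $\mathbf{U}$ as an $\mathfrak{S}_n$-module. The multiplication map
\[
N_n^{2k}\times N_n^{n(n-1)-2k}\longrightarrow N_n^{n(n-1)}\cong \mathbf{U}
\]
is $\mathfrak{S}_n$-equivariant and perfect, so $N_n^{n(n-1)-2k}\cong (N_n^{2k})^{\ast}\otimes \mathbf{U}$, and using the self-duality of complex $\mathfrak{S}_n$-representations we obtain $N_n^{n(n-1)-2k}\cong N_n^{2k}\otimes \mathbf{U}$. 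Summing against $q^k$ yields (1). Alternatively, the identification of $N_n$ with $H^*(\Fln)$ under the star action allows one to mimic Proposition~\ref{prop:palindromicity_Xh}: apply the Atiyah--Bott localization formula to the equivariant Gysin map $\Fln\to pt$, and the equivariant Euler classes $\prod_{j<i}(t_{w(i)}-t_{w(j)})$ produce the same sign twist.

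For (2), I would start from \eqref{eq:HBT}, which yields
\[
R(H^*(BT);q)=R(H^*(BT)^{\mathfrak{S}_n};q)\cdot R(N_n;q)
\]
in $R(\mathfrak{S}_n)[[q]]$. The invariant subring $\C[e_1(t),\dots,e_n(t)]$ carries the trivial action, and since $e_i(t)$ has algebraic degree $i$,
\[
R(H^*(BT)^{\mathfrak{S}_n};q)=\prod_{i=1}^n\frac{1}{1-q^i},
\]
viewed as a power series with trivial-representation coefficients. A direct substitution $q\mapsto q^{-1}$ and the computation $(1-q^{-i})^{-1}=-q^i(1-q^i)^{-1}$ give
\[
R(H^*(BT)^{\mathfrak{S}_n};q^{-1})=(-1)^n q^{n(n+1)/2}R(H^*(BT)^{\mathfrak{S}_n};q).
\]
Multiplying this by (1) contracts the exponent $n(n+1)/2-n(n-1)/2=n$ and produces the claimed identity $R(H^*(BT);q^{-1})=(-q)^n R(H^*(BT);q)\otimes \mathbf{U}$.

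The only non-formal step is the sign twist in (1), i.e.\ the identification of the socle of $N_n$ with $\mathbf{U}$; once that is in hand, the remaining computations---self-duality of $\mathfrak{S}_n$-modules and the Hilbert series of $\C[e_1,\dots,e_n]$---are routine manipulations.
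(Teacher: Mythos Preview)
Your proposal is correct. For part (2) your argument is essentially identical to the paper's: both factor $R(H^*(BT);q)$ as $\prod_{k=1}^n(1-q^k)^{-1}\cdot R(N_n;q)$ via \eqref{eq:HBT}, substitute $q\mapsto q^{-1}$, and combine with (1).

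For part (1), however, your primary argument takes a genuinely different route. The paper proves (1) by identifying $N_n$ with $H^*(\Fln)$ under the star action and then running the Atiyah--Bott localization machinery exactly as in Proposition~\ref{prop:palindromicity_Xh}: one computes $p_!^T(\sigma*f)=(\sgn\sigma)\,p_!^T(f)$ directly from the formula $e^T(v\sigma^{-1})=(\sgn\sigma)\,e^T(v)$ for the full flag variety, and this sign twist in the Poincar\'e pairing yields the statement. Your primary approach instead stays entirely inside the coinvariant algebra $N_n$: you use that it is a Poincar\'e duality algebra whose socle is spanned by the Vandermonde, which visibly carries the sign representation, so the equivariant pairing lands in $\mathbf{U}$ rather than $\mathbf{1}$. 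This is more elementary and purely algebraic---no equivariant cohomology, no localization---and arguably isolates the essential point more cleanly. The paper's approach, on the other hand, keeps the argument parallel to Proposition~\ref{prop:palindromicity_Xh} and fits the paper's unifying theme of deriving palindromicity from localization. You do mention the paper's method as an alternative, so you are aware of both; either is a complete proof.
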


\begin{proof}
(1) As we observed above, we may consider $H^*(\Fln)$ with the star $\mathfrak{S}_n$-action instead of $N_n$. As in the proof of Proposition~\ref{prop:palindromicity_Xh}, we have the non-degenerate bilinear form \eqref{eq:bilinear_form}. The claim in the proof of Proposition~\ref{prop:palindromicity_Xh} needs to be modified for the star action as follows. 

\medskip
\noindent
{\bf Claim.} $\langle \sigma*f,\sigma*g\rangle=\sgn(\sigma)\langle f,g\rangle$ for any $\sigma\in \mathfrak{S}_n$ and $f,g\in H^*(\Fln)$, where $\sgn(\sigma)$ denotes the sign of the permutation $\sigma$. 

\medskip
\noindent
Indeed, it follows from \eqref{eq:9-1} and \eqref{eq:9-2} that 
\begin{equation*}
\begin{split}
p_!^{T}(\sigma* f)&=\sum_{w\in \mathfrak{S}_n}\frac{(\sigma*f)(w)}{e^T(w)}=\sum_{w\in \mathfrak{S}_n}\frac{f(w\sigma)}{e^T(w)}\\
&=\sum_{v\in \mathfrak{S}_n}\frac{f(v)}{e^T({v\sigma^{-1}})}=\sum_{v\in \mathfrak{S}_n}\frac{f(v)}{(\sgn\sigma)e^T(v)}
=(\sgn\sigma)\left(\sum_{v\in \mathfrak{S}_n}\frac{f(v)}{e^T(v)}\right)\\
&=(\sgn\sigma)p_!^T(f)
\end{split}
\end{equation*}
where $e^T(v\sigma^{-1})=(\sgn \sigma)e^T(v)$ above follows from \eqref{eq:9-2} (note that $h(j)=n$ in \eqref{eq:9-2} for $\Xh=\Fln$). 
Therefore, the claim follows since $(\sigma *f)(\sigma*g)=\sigma*(fg)$ and the $\mathfrak{S}_n$-action on $H^0(BT)=\Z$ is trivial. 

Since any $\mathfrak{S}_n$-module is isomorphic to its dual, the claim above implies that $H^{2k}(\Fln)$ is isomorphic to $H^{n(n-1)-2k}(\Fln)\otimes\mathbf{U}$ as $\mathfrak{S}_n$-modules, proving (1). 

(2) It follows from \eqref{eq:HBT_invariant} and \eqref{eq:HBT} that
\begin{equation} \label{eq:BT_Nn}
R(H^*(BT);q)=R(H^*(BT)^{\mathfrak{S}_n};q)R(N_n;q)=\left(\prod_{k=1}^{n}\frac{1}{1-q^k}\right)R(N_n;q).
\end{equation}
Using this together with statement (1), we have 
\[
\begin{split}
R(H^*(BT);q^{-1})&=\left(\prod_{k=1}^{n}\frac{1}{1-q^{-k}}\right)R(N_n;q^{-1})\\
&=(-q)^n\left(\prod_{k=1}^{n}\frac{1}{1-q^k}\right)q^{n(n-1)/2}R(N_n;q^{-1})\\
&=(-q)^n\left(\prod_{k=1}^{n}\frac{1}{1-q^k}\right)R(N_n;q)\otimes\mathbf{U}\\
&=(-q)^nR(H^*(BT);q)\otimes\mathbf{U},
\end{split}
\]
proving (2). 
\end{proof}

Since
\[
R(H^*_T(\Xh);q)=R(H^*(BT);q)R(H^*(\Xh);q)
\]
by \eqref{eq:RHTXh}, the following corollary follows from Proposition~\ref{prop:palindromicity_Xh} and Lemma~\ref{lemm:palindromicity} (2).

\begin{coro} \label{coro:palindromicity_XhT}
$q^{|h|}R(H^*_T(\Xh);q^{-1})=(-q)^nR(H^*_T(\Xh);q)\otimes\mathbf{U}$. 
\end{coro}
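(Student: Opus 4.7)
The plan is to combine the palindromicity of $H^*(\Xh)$ (Proposition~\ref{prop:palindromicity_Xh}) with the palindromicity of $H^*(BT)$ (Lemma~\ref{lemm:palindromicity}(2)) via the tensor product decomposition \eqref{eq:HXh}, which tells us
\[
R(H^*_T(\Xh);q)=R(H^*(BT);q)\,R(H^*(\Xh);q)
\]
as identities in $R(\mathfrak{S}_n)[[q,q^{-1}]]$; this is \eqref{eq:RHTXh}.

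First, I would substitute $q\to q^{-1}$ in \eqref{eq:RHTXh} to obtain
\[
R(H^*_T(\Xh);q^{-1})=R(H^*(BT);q^{-1})\,R(H^*(\Xh);q^{-1}).
\]
Then I would multiply both sides by $q^{|h|}$ and split the factor $q^{|h|}$ as $q^{|h|}=1\cdot q^{|h|}$, applying Lemma~\ref{lemm:palindromicity}(2) to the first factor on the right and Proposition~\ref{prop:palindromicity_Xh} to the second factor. This yields
\[
q^{|h|}R(H^*_T(\Xh);q^{-1})=\bigl((-q)^nR(H^*(BT);q)\otimes\mathbf{U}\bigr)\cdot R(H^*(\Xh);q).
\]

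Finally, I would use the fact that tensoring with the one-dimensional module $\mathbf{U}$ commutes with the multiplication in $R(\mathfrak{S}_n)[[q]]$: for any graded $\mathfrak{S}_n$-modules $A$ and $B$, $(A\otimes\mathbf{U})\otimes B\cong (A\otimes B)\otimes\mathbf{U}$ since $\mathbf{U}\otimes\mathbf{U}$ is trivial and $\mathbf{U}$ is one-dimensional. Applying this and \eqref{eq:RHTXh} once more produces
\[
q^{|h|}R(H^*_T(\Xh);q^{-1})=(-q)^n R(H^*_T(\Xh);q)\otimes\mathbf{U},
\]
which is the desired identity.

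There is no real obstacle here; the corollary is a pure formal consequence of the two palindromicity statements already proved, and all the geometric content (Poincar\'e duality, Atiyah--Bott localization, the $\sgn$-twist picked up by the star action) has already been absorbed into Proposition~\ref{prop:palindromicity_Xh} and Lemma~\ref{lemm:palindromicity}. The only thing one must verify carefully is the compatibility of tensoring by $\mathbf{U}$ with the product structure, which is immediate because $\mathbf{U}$ is one-dimensional.
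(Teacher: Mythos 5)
Your proposal is correct and is exactly the argument the paper intends: the corollary is stated as an immediate consequence of the factorization $R(H^*_T(\Xh);q)=R(H^*(BT);q)\,R(H^*(\Xh);q)$ together with Proposition~\ref{prop:palindromicity_Xh} and Lemma~\ref{lemm:palindromicity}(2). You merely spell out the routine substitution $q\mapsto q^{-1}$, the splitting of the $q^{|h|}$ factor, and the compatibility of $\otimes\,\mathbf{U}$ with the product, all of which the paper leaves implicit.
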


This corollary implies the palindromicity of $\LLT$ polynomials in \eqref{eq:palindromicity}. Indeed, since 
\[
H^*_T(\Xh)=\C[x_1,\dots,x_n]\otimes \Y,
\]
we have 
\begin{equation} \label{eq:TXH_Y}
R(H^*_T(\Xh);q)=\frac{1}{(1-q)^n}R(\Y;q).
\end{equation}
Substituting this for the identity in Corollary~\ref{coro:palindromicity_XhT}, we obtain
\[
q^{|h|}\frac{1}{(1-q^{-1})^n}R(\Y;q^{-1})=(-q)^n\frac{1}{(1-q)^n}R(\Y;q)\otimes\mathbf{U},
\]
which reduces to 
\begin{equation} \label{eq:Y_theta}
q^{|h|}R(\Y;q^{-1})=R(\Y;q)\otimes\mathbf{U}.
\end{equation} 
Taking the Frobenius characteristic on both sides above, we obtain the desired palindromicity 
\[
q^{|h|}\LLT_h(\z;q^{-1})=\omega \LLT_h(\z;q)
\]
in \eqref{eq:palindromicity} by Proposition~\ref{prop:LLT} because the involution $\omega$ on $\Lambda_n$ corresponds to taking the tensor product with $\mathbf{U}$ in $\mathfrak{S}_n$-modules. 

The following is the twin version of Proposition~\ref{prop:palindromicity_Xh}. 

\begin{prop} \label{prop:palindromicity_Yh}
Let $\Yh$ be the twin of the regular semisimple Hessenberg variety $\Xh$ and we think of $H^*(\Yh)$ as an $\mathfrak{S}_n$-module with the dagger action. Then
\[q^{|h|}R(H^*(\Yh);q^{-1})=R(H^*(\Yh);q)\otimes \mathbf{U},
\]
in other words, $H^{2k}(\Yh)$ is isomorphic to $H^{2(|h|-k)}(\Yh)\otimes\mathbf{U}$ as $\mathfrak{S}_n$-modules for any $0\le k\le |h|$. 
\end{prop}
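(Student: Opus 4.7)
The plan is to prove this proposition by reducing it, via Proposition~\ref{prop:L_Yh}, to the identity $q^{|h|}R(\Y;q^{-1})=R(\Y;q)\otimes\mathbf{U}$ already obtained in the paragraph immediately preceding the statement, namely equation~\eqref{eq:Y_theta}. Proposition~\ref{prop:L_Yh} supplies an isomorphism $\Y\cong H^*(\Yh)$ of graded $\mathfrak{S}_n$-modules, where $\Y$ carries the dot action inherited from $H^*_T(\Xh)$ and $H^*(\Yh)$ carries the dagger action. Transporting equation~\eqref{eq:Y_theta} across this isomorphism gives exactly the claim.

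So in fact the argument was already carried out just above, and the proposition simply records its geometric reading. The overall chain is: Corollary~\ref{coro:palindromicity_XhT} packages the $\mathfrak{S}_n$-equivariant Poincar\'e duality of $\Xh$ (which itself comes from Proposition~\ref{prop:palindromicity_Xh} together with Lemma~\ref{lemm:palindromicity}(2)); the decomposition $H^*_T(\Xh)=\C[x_1,\dots,x_n]\otimes \Y$ from Lemma~\ref{lemm:Qh}, combined with the fact that the dot action fixes every $x_i$ (so that the $\C[x_1,\dots,x_n]$ factor contributes $1/(1-q)^n$ with trivial $\mathfrak{S}_n$-action), extracts from Corollary~\ref{coro:palindromicity_XhT} the palindromicity of $\Y$; finally, Proposition~\ref{prop:L_Yh} re-labels $\Y$ as $H^*(\Yh)$ with the dagger action.

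An alternative topological route, more in the spirit of Proposition~\ref{prop:palindromicity_Xh}, would be to apply Poincar\'e duality directly on the twin: use that $\Yh$ is a compact smooth oriented $T$-manifold with $\Yh^T=\mathfrak{S}_n$ and $H^{odd}(\Yh)=0$ to build an equivariant Gysin map $p_!^T$, then apply the Atiyah-Bott localization formula and examine the transformation of the resulting Poincar\'e pairing under the dagger action. The main obstacle on this route is bookkeeping the orientation of $\Yh$ at the fixed points: unlike $\Xh$, the twin is not naturally a complex manifold, so the signs appearing in the equivariant Euler classes $e^T(w)$ at the fixed points $w\in\mathfrak{S}_n$ are not determined by an intrinsic complex structure and must be chosen and tracked by hand to produce the $\sgn$-twist that accounts for the tensor factor $\mathbf{U}$ in the conclusion (compare how $\sgn$ enters the proof of Lemma~\ref{lemm:palindromicity}(1) via the star action). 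The reduction via Proposition~\ref{prop:L_Yh} short-circuits this bookkeeping entirely, which is why I would take that route.
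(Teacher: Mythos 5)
Your proposal is correct and coincides exactly with the paper's proof: Proposition~\ref{prop:L_Yh} identifies $H^*(\Yh)$ (with the dagger action) with $\Y$, and the statement is then just a restatement of the identity~\eqref{eq:Y_theta} established in the preceding paragraph. The extra remarks about the alternative Gysin/localization route on $\Yh$ and its orientation bookkeeping are a sensible aside but not needed.
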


\begin{proof}
Since $\Y=H^*(\Yh)$ by Proposition~\ref{prop:L_Yh}, the proposition follows from the identity \eqref{eq:Y_theta}. 
\end{proof}

We give an explicit description of $R(N_n;q)$ later in Proposition~\ref{prop:R(N_n)} from which the following fact follows. Indeed, it should be well-known among experts. 

\begin{prop}[see \cite{tymo08}] 
\label{prop:Nn_regular}
$N_n$ with the $\mathfrak{S}_n$-action permuting $t_i$'s (in other words, $H^*(\Fln)$ with the star $\mathfrak{S}_n$-action) is a regular representation of $\mathfrak{S}_n$ if we forget the grading. 
\end{prop}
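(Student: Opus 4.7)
The plan is to deduce the statement from a Molien-series computation of the graded character of $N_n$, then specialize at $q=1$ to read off the ungraded character. The $\mathfrak{S}_n$-action on $H^*(BT)=\C[t_1,\ldots,t_n]$ permutes the $t_i$'s, so this is the symmetric algebra on the defining permutation representation $V$. For $\sigma\in\mathfrak{S}_n$ of cycle type $\lambda=(\lambda_1,\lambda_2,\ldots)$, the eigenvalues of $\sigma$ on $V$ are the roots of unity coming from the individual cycles, and the standard Molien computation gives
\[
\mathrm{tr}\bigl(\sigma;\,H^*(BT)\bigr)(q) = \frac{1}{\prod_i (1-q^{\lambda_i})}.
\]
Meanwhile the invariant subring $H^*(BT)^{\mathfrak{S}_n}=\C[e_1,\ldots,e_n]$ carries the trivial $\mathfrak{S}_n$-action, with Hilbert series $\prod_{k=1}^{n} 1/(1-q^k)$. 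Combining these with the tensor-product decomposition \eqref{eq:HBT} gives
\[
\mathrm{tr}\bigl(\sigma;\,N_n\bigr)(q) = \frac{\prod_{k=1}^{n}(1-q^k)}{\prod_i (1-q^{\lambda_i})},
\]
which is essentially the content of the forthcoming Proposition~\ref{prop:R(N_n)}.

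I would then specialize at $q=1$. Since $N_n$ is finite-dimensional (of dimension $n!$ by Chevalley's theorem), the graded trace is a polynomial in $q$, so this evaluation is legitimate. For $\sigma=e$, the cycle type $\lambda=(1,1,\ldots,1)$ has $n$ parts and each ratio $(1-q^k)/(1-q)\to k$ as $q\to 1$, producing the value $n!$. For any $\sigma\neq e$, the cycle type has strictly fewer than $n$ parts, so the denominator vanishes at $q=1$ to an order strictly less than $n$, while the numerator vanishes to order exactly $n$; the limit is therefore $0$. Hence the ungraded character of $N_n$ equals $n!$ at the identity and $0$ elsewhere, which is exactly the character of the regular representation, so the two $\mathfrak{S}_n$-modules are isomorphic.

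The main subtlety is the careful passage from the graded module identity \eqref{eq:HBT} to the numerical trace identity displayed above. This is clean because $H^*(BT)$ is $\mathfrak{S}_n$-equivariantly free over $H^*(BT)^{\mathfrak{S}_n}$ by Chevalley's theorem, so one may take equivariant Hilbert series of both factors and compare coefficients of $q^k$ in each degree to extract the claimed identity; once that is in place the remainder is a routine limit computation. An alternative route, more in the spirit of the rest of the paper, would replace the Molien calculation by an Atiyah–Bott localization applied to $\Fl(n)$ with the star action and use \eqref{eq:9-1}–\eqref{eq:9-2}, but the direct character argument seems the most economical.
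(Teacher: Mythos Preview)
Your argument is correct. Both your proof and the paper's deduce the result from the factorization \eqref{eq:HBT} by first computing the graded $\mathfrak{S}_n$-structure of $H^*(BT)$, dividing out the invariant factor, and then specializing at $q=1$. The difference lies in how the first step is carried out. The paper (in Proposition~\ref{prop:R(N_n)}) decomposes $\C[t_1,\dots,t_n]$ into $\mathfrak{S}_n$-orbits of monomials, expressing $R(N_n;q)$ as a sum of induced modules $\Ind_{\mathfrak{S}_I}^{\mathfrak{S}_n}\mathbf{1}$ over subsets $I\subset[n-1]$; upon setting $q=1$, only the term $I=[n-1]$ survives, and $\Ind_{\mathfrak{S}_1\times\cdots\times\mathfrak{S}_1}^{\mathfrak{S}_n}\mathbf{1}$ is visibly the regular representation. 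You instead compute the graded \emph{character} directly via Molien's formula, obtaining $\mathrm{tr}(\sigma;N_n)(q)=\prod_k(1-q^k)/\prod_i(1-q^{\lambda_i})$, and then read off the regular character from the order of vanishing at $q=1$. Your route is more economical for the bare regularity statement; the paper's route, while longer, yields the explicit induced-module expansion of $R(N_n;q)$ that is reused in Section~\ref{sect:7} to identify $\LLT_h(\z;q+1)$ for $h=(n,\dots,n)$. One small comment: your parenthetical ``which is essentially the content of the forthcoming Proposition~\ref{prop:R(N_n)}'' overstates the overlap---that proposition gives the induced-module decomposition rather than the Molien character formula, though of course the two determine each other.
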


The $\mathfrak{S}_n$-module $H^*(\Yh)$ has the following nice property. The equivalent statement in terms of LLT polynomials is noticed in \cite[Lemma 4.1]{al-pa18}. 

\begin{prop} \label{prop:HYh_regular}
The $\mathfrak{S}_n$-module $H^*(\Yh)$ with the dagger action is a regular representation of $\mathfrak{S}_n$ for any Hessenberg function $h$ if we forget the grading. 
\end{prop}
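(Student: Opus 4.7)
The plan is to compute the ungraded character of $H^*(\Yh)$ on each conjugacy class of $\mathfrak{S}_n$ and verify it matches the regular character $\sigma\mapsto \delta_{\sigma,e}\,n!$. Combining Proposition~\ref{prop:L_Yh} with \eqref{eq:RHTXh} yields the key identity
\[
R(H^*(\Yh);q) \;=\; (1-q)^n\,R(H^*(BT);q)\,R(H^*(\Xh);q)
\]
in $R(\mathfrak{S}_n)[[q]]$. Since $\Xh$ and $\Yh$ are compact with vanishing odd cohomology, both $R(H^*(\Xh);q)$ and $R(H^*(\Yh);q)$ are polynomials in $q$, so both sides may be evaluated at $q=1$ to recover the underlying ungraded $\mathfrak{S}_n$-modules.

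Next I would take the character at an arbitrary $\sigma \in \mathfrak{S}_n$ of cycle type $\lambda=(\lambda_1,\ldots,\lambda_r)$. The standard formula for the graded character of the symmetric algebra on a permutation representation gives $\chi_{H^*(BT)}(\sigma;q) = \prod_{i=1}^r (1-q^{\lambda_i})^{-1}$, and using $(1-q)^n = \prod_i(1-q)^{\lambda_i}$ one obtains
\[
(1-q)^n\,\chi_{H^*(BT)}(\sigma;q)\;=\;\prod_{i=1}^r \frac{(1-q)^{\lambda_i}}{1-q^{\lambda_i}}.
\]
Each factor tends to $1$ if $\lambda_i=1$ and to $0$ if $\lambda_i\geq 2$, so the limit as $q\to 1$ equals $\delta_{\sigma,e}$. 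Taking the character at $\sigma$ in the displayed identity and setting $q=1$ thus gives $\chi_{H^*(\Yh)}(\sigma) = \delta_{\sigma,e}\cdot \dim H^*(\Xh)$.

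To finish, note that $\dim H^*(\Xh) = n!$: since $H^{odd}(\Xh)=0$ by Proposition~\ref{prop:Xh_property}(3), the total Betti number equals the topological Euler characteristic, which by torus-action localization equals $|\Xh^T|=n!$ via \eqref{eq:fixed_set}. Hence $\chi_{H^*(\Yh)}(\sigma) = \delta_{\sigma,e}\cdot n!$, the character of the regular representation. The one point requiring care is bookkeeping of the $\mathfrak{S}_n$-actions: one must match the dagger action on $H^*(\Yh)$ with the dot action on $\Y$ via $\xi$ (Proposition~\ref{prop:L_Yh}), and use Lemma~\ref{lemm:Qh} to ensure the decomposition $H^*_T(\Xh)\cong H^*(BT)\otimes H^*(\Xh)$ is $\mathfrak{S}_n$-equivariant with $\mathfrak{S}_n$ permuting the $t_i$'s. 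With these identifications in place the computation above is routine, so honestly there is no serious obstacle beyond the character bookkeeping.
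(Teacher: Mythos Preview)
Your proof is correct and follows the same skeleton as the paper's: combine Proposition~\ref{prop:L_Yh} with \eqref{eq:RHTXh} to obtain
\[
R(H^*(\Yh);q)=(1-q)^n\,R(H^*(BT);q)\,R(H^*(\Xh);q),
\]
then pass to $q=1$ and use $\dim_\C H^*(\Xh)=n!$.  The only difference lies in how the factor $(1-q)^nR(H^*(BT);q)$ is handled at $q=1$.  The paper rewrites it via \eqref{eq:BT_Nn} as $\bigl(\prod_{k=1}^n\frac{1-q}{1-q^k}\bigr)R(N_n;q)$ and then invokes Proposition~\ref{prop:Nn_regular} (the regularity of $N_n$), a fact stated there but only established later in Proposition~\ref{prop:R(N_n)}.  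You instead evaluate the graded character of the symmetric algebra directly, $\chi_{S^*V}(\sigma;q)=\prod_i(1-q^{\lambda_i})^{-1}$, and observe that $(1-q)^n$ times this specializes to $\delta_{\sigma,e}$ at $q=1$.  This makes the argument self-contained (indeed, applied with $R(H^*(\Xh);q)$ replaced by the trivial polynomial it reproves Proposition~\ref{prop:Nn_regular}), at the cost of a small analytic step in justifying the limit---harmless here since both sides are rational in $q$ with no pole at $q=1$.  The paper's route is slightly more algebraic but depends on a forward reference; yours trades that for the classical character formula and is otherwise equivalent.
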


\begin{proof}
Recall that $R(H^*_T(\Xh);q)$ has two presentations: one is 
\[
\begin{split}
R(H^*_T(\Xh);q)&=R(H^*(BT);q)R(H^*(\Xh);q)\\
&=\left(\prod_{k=1}^n\frac{1}{1-q^k}\right)R(N_n;q)R(H^*(\Xh);q)
\end{split}
\]
by \eqref{eq:RHTXh} and \eqref{eq:BT_Nn}, and the other is 
\[
R(H^*_T(\Xh);q)=\frac{1}{(1-q)^n} R(H^*(\Yh);q)
\]
by Proposition~\ref{prop:L_Yh} and \eqref{eq:TXH_Y}. Equating the right hand sides above, we obtain
\begin{equation} \label{eq:RYh}
R(H^*(\Yh);q)=\left(\prod_{k=1}^n\frac{1-q}{1-q^k}\right) %(1+q+\cdots+q^{k-1})\right)^{-1}
R(N_n;q)R(H^*(\Xh);q).
\end{equation}
Therefore, setting $q=1$, we have
\begin{equation*} \label{eq:H*Yh}
\begin{split}
R(H^*(\Yh);1)&=(n!)^{-1}R(N_n;1)R(H^*(\Xh);1)\\
&=(n!)^{-1}R(N_n\otimes H^*(\Xh);1).
\end{split}
\end{equation*}
Here, $N_n\otimes H^*(\Xh)=n!N_n$ because $N_n$ is a regular representation of $\mathfrak{S}_n$ by Proposition~\ref{prop:Nn_regular} and $\dim_\C H^*(\Xh)=n!$, proving the proposition. 
\end{proof}

The cohomology $H^*(\Fln)$ has three $\mathfrak{S}_n$-actions: the dot action, the star action, and the dagger action. The dot action on $H^*(\Fln)$ is trivial.  The following proposition shows that $H^*(\Fln)$ with the other two actions are same as graded $\mathfrak{S}_n$-modules. 
%The following corollary follows from the proof of Proposition~\ref{prop:HYh_regular}. 

\begin{prop} \label{prop:star&dagger}
$H^*(\Fln)$ with the dagger action agrees with $N_n$ as a graded $\mathfrak{S}_n$-module, so that it also agrees with $H^*(\Fln)$ with the star action as a graded $\mathfrak{S}_n$-module. 
\end{prop}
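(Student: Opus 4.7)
The plan is to specialize Proposition~\ref{prop:L_Yh} to the Hessenberg function $h = (n, n, \ldots, n)$, for which $M_h$ consists of all Hermitian matrices and hence $\Xh = \Yh = \Fln$. In this case Proposition~\ref{prop:L_Yh} yields an isomorphism of graded $\mathfrak{S}_n$-modules
\[
H^*(\Fln)\ \text{with the dagger action}\ \cong\ \Y = H^*_T(\Fln)/(x_1,\ldots,x_n)\ \text{with the dot action},
\]
induced by the automorphism $\xi$ of \eqref{eq:xi}. Thus the first claim is reduced to identifying this particular $\Y$ with $N_n$ as graded $\mathfrak{S}_n$-modules.

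Next I would carry out that identification using the classical Borel presentation
\[
H^*_T(\Fln) = \C[t_1,\ldots,t_n, x_1,\ldots,x_n]\big/\bigl(e_i(t) - e_i(x) : i=1,\ldots,n\bigr),
\]
whose compatibility with the GKM description of Proposition~\ref{prop:equivariant_cohomology_Xh} is ensured by Remark~\ref{rema:meaning_of_xi}. Quotienting by $(x_1,\ldots,x_n)$ then gives
\[
\Y = \C[t_1,\ldots,t_n]\big/(e_1(t),\ldots,e_n(t)) = N_n.
\]
By \eqref{eq:dot_action_on_t_and_x}, the dot action permutes the $t_i$'s and fixes the $x_i$'s, so the ideal $(x_1,\ldots,x_n)$ is $\mathfrak{S}_n$-stable and the descended action on $\Y$ is precisely the permutation of $t_i$'s that defines the $\mathfrak{S}_n$-structure on $N_n$. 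Combining with the previous paragraph yields the first assertion.

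The second assertion is then immediate: the paragraph preceding Lemma~\ref{lemm:palindromicity} already records that $H^*(\Fln)$ with the star action is isomorphic to $N_n$ (with its permutation action) as graded $\mathfrak{S}_n$-modules, so transitivity of the isomorphisms forces the dagger and star actions on $H^*(\Fln)$ to agree as graded $\mathfrak{S}_n$-modules.

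I do not anticipate a serious obstacle; the only bookkeeping point is to verify that the two presentations of $H^*_T(\Fln)$ in play are compatible. This reduces to checking that the classes $x_i$ defined in \eqref{eq:x_i} satisfy the Borel relations $e_i(x) = e_i(t)$ inside $\Map(\mathfrak{S}_n,\C[t_1,\ldots,t_n])$, which follows at once from $x_i(w) = t_{w(i)}$ because $e_i(x_1,\ldots,x_n)(w) = e_i(t_{w(1)},\ldots,t_{w(n)}) = e_i(t_1,\ldots,t_n)$ for every $w \in \mathfrak{S}_n$.
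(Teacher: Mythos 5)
Your proof is correct, but it takes a genuinely different route from the paper's. The paper works at the level of graded characters: it invokes the identity $R(H^*(\Yh);q)=\bigl(\prod_{k=1}^n\frac{1-q}{1-q^k}\bigr)R(N_n;q)\,R(H^*(\Xh);q)$ established earlier (in the proof of Proposition~\ref{prop:HYh_regular}), then specializes to $h=(n,\dots,n)$ where the dot action on $H^*(\Fln)$ is trivial so that $R(H^*(\Xh);q)$ is just the Poincar\'e polynomial $\prod_{k=1}^n\frac{1-q^k}{1-q}$, causing the formula to collapse to $R(H^*(\Yh);q)=R(N_n;q)$. You instead produce an explicit isomorphism of graded $\mathfrak{S}_n$-modules: first via Proposition~\ref{prop:L_Yh} specialized to $h=(n,\dots,n)$, then by reading off $\Y=H^*_T(\Fln)/(x_1,\dots,x_n)\cong N_n$ from the Borel presentation $H^*_T(\Fln)=\C[t,x]/(e_i(t)-e_i(x))$ and checking the dot action descends to the permutation action on the $t_i$'s via \eqref{eq:dot_action_on_t_and_x}. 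Both are valid: the paper's argument is shorter given the machinery already in place and requires only character-level information, while yours is more structural, gives the isomorphism explicitly rather than asserting equality of Frobenius series, and usefully verifies (in your last paragraph) that the GKM classes $x_i$ satisfy the Borel relations, which is a compatibility check the paper leaves implicit.
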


\begin{proof}
When $h=(n,\dots,n)$, $\Xh=\Fln$ and the dot action on its cohomology is trivial since the cohomology is generated by $x_1,\dots,x_n$ as a graded ring, so that $R(\Xh;q)$ for $h=(n,\dots,n)$ agrees with the Poincar\'e polynomial of $\Fln$, i.e. 
\[
R(H^*(\Xh);q)=\prod_{k=1}^n\frac{1-q^k}{1-q} \quad \text{for $h=(n,\dots,n)$}.
\]
Therefore, \eqref{eq:RYh} reduces to 
\[
R(H^*(\Yh);q)=R(N_n;q) \quad\text{for $h=(n,\dots,n)$}.
\] 
Here $\Yh$ for $h=(n,\dots,n)$ is also $\Fln$ but the dagger action is considered on its cohomology. This proves the proposition. 
\end{proof}

\section{Face modules of the permutohedron and LLT polynomials} \label{sect:6}

When $h=(2,3,\dots,n,n)$, the orbit space $\Xh/T=\Yh/T$ is the permutohedron $\Pi_n$ defined by 
%the convex hull of $n!$ points in $\R^n$: 
\[
\Pi_n:=\text{Convex hull of }\{(w(1),\dots,w(n))\in \R^n\mid w\in \mathfrak{S}_n\}.
\]
It is a simple polytope of dimension $n-1$ and has symmetry of $\mathfrak{S}_n$. In this section, we will observe that the $\mathfrak{S}_n$-module $H^*(\Yh)$ can be interpreted in terms of $\mathfrak{S}_n$-modules generated by faces of $\Pi_n$. This gives a geometrical meaning of the $e$-positivity of $\LLT_h(\z;q+1)$. 

The $f$-polynomial of $\Pi_n$ and $h$-polynomial of $\Pi_n$ are respectively defined as 
\[
f_{\Pi_n}(q):=\sum_{i=0}^{n-1}f_i(\Pi_n)q^i,\qquad h_{\Pi_n}(q):=f_{\Pi_n}(q-1)
\]
where $f_i(\Pi_n)$ denotes the number of $i$-dimensional faces of $\Pi_n$. 
The symmetric group $\mathfrak{S}_n$ acts on $\R^n$ by permuting coordinates. This action preserves faces of $\Pi_n$, so the complex vector space $F_i(\Pi_n)$ generated by all $i$-dimensional faces of $\Pi_n$ becomes an $\mathfrak{S}_n$-module of dimension $f_i(\Pi_n)$ for each $i=0,1,\dots,n-1$. Therefore, 
\begin{equation} \label{eq:FH}
F_{\Pi_n}(q):=\sum_{i=0}^{n-1}F_i(\Pi_n)q^i,\qquad H_{\Pi_n}(q):=F_{\Pi_n}(q-1)
\end{equation}
are a natural generalization of $f_{\Pi_n}(q)$ and $h_{\Pi_n}(q)$ to $\mathfrak{S}_n$-modules. 

On the other hand, when $h=(2,3,\dots,n,n)$, $\Xh$ is a toric variety called a permutohedral variety. Its twin $\Yh$ is not a toric variety but a smooth $T$-manifold. Their Poincar\'e polynomials are same and known to agree with the $h$-polynomial $h_{\Pi_n}(q)$. %\footnote{$h_{Pi_n}(q)$ is known to agree with the Eulerian polynomial of degree $n-1$.}. 
Therefore, it is natural to ask whether either $R(H^*(\Xh);q)$ or $R(H^*(\Yh);q)$ agrees with $H_{\Pi_n}(q)$. The following gives the answer. 

\begin{theo} \label{theo:h=23nn}
Let $h=(2,3,\dots,n,n)$. Then 
\[
H_{\Pi_n}(q)=R(H^*(\Yh);q)\otimes\mathbf{U}.
\]
Indeed, these graded $\mathfrak{S}_n$-modules are given by 
\begin{equation} \label{eq:induction_description}
\sum_{I\subset [n-1]}\left(\Ind_{\mathfrak{S}_I}^{\mathfrak{S}_n}\mathbf{1}\right)(q-1)^{n-1-|I|}, 
\end{equation}
where $\mathbf{1}$ denotes the complex one-dimensional trivial $\mathfrak{S}_n$-module and for a subset $I=\{i_1,\dots,i_d\}$ of $[n-1]$ with $i_1<\cdots<i_d$, 
\begin{equation} \label{eq:SI}
\mathfrak{S}_I:=\mathfrak{S}_{i_1}\times \mathfrak{S}_{i_2-i_1}\times\cdots\times \mathfrak{S}_{n-i_d}.
\end{equation}
\end{theo}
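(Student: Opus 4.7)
The plan is to identify both graded $\mathfrak{S}_n$-modules appearing in the theorem with the explicit formula \eqref{eq:induction_description}; the desired isomorphism then follows since Frobenius characteristic determines graded $\mathfrak{S}_n$-modules up to isomorphism. For $H_{\Pi_n}(q)$, I would argue combinatorially: an $i$-dimensional face of $\Pi_n$ corresponds to an ordered set partition $(B_1,\dots,B_k)$ of $[n]$ with $k=n-i$ blocks, and the coordinate-permutation action of $\mathfrak{S}_n$ on $\R^n$ permutes these partitions in the natural way. The stabilizer of such a face is the Young subgroup $\mathfrak{S}_{B_1}\times\cdots\times \mathfrak{S}_{B_k}$, so the orbit of faces with composition type $(c_1,\dots,c_k)$ is isomorphic to $\mathfrak{S}_n/\mathfrak{S}_I$ for $I=\{c_1,c_1+c_2,\dots,c_1+\cdots+c_{k-1}\}\subset[n-1]$; note $|I|=n-1-i$. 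Summing over orbits gives $F_i(\Pi_n)=\bigoplus_{|I|=n-1-i}\Ind_{\mathfrak{S}_I}^{\mathfrak{S}_n}\mathbf{1}$, and the substitution $q\mapsto q-1$ in $F_{\Pi_n}(q)$ produces the asserted form of $H_{\Pi_n}(q)$.

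For $R(H^*(\Yh);q)\otimes\mathbf{U}$, I would pass through the Frobenius characteristic $\ch$. By Proposition~\ref{prop:L_Yh} together with $\ch(M\otimes\mathbf{U})=\omega\,\ch(M)$, it has Frobenius series $\omega\LLT_h(\z;q)$, whereas the formula \eqref{eq:induction_description} has Frobenius series $\sum_{I\subset[n-1]} h_{\alpha(I)}(q-1)^{n-1-|I|}$, using $\ch(\Ind_{\mathfrak{S}_I}^{\mathfrak{S}_n}\mathbf{1})=h_{\alpha(I)}$, where $\alpha(I)=(i_1,i_2-i_1,\ldots,n-i_d)$ is the composition encoded by $I=\{i_1<\cdots<i_d\}$ as in \eqref{eq:SI}. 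Applying $\omega$ once more, the theorem reduces to the symmetric function identity
\[
\LLT_h(\z;q)=\sum_{\alpha\vDash n} e_\alpha(q-1)^{n-\ell(\alpha)}\qquad (h=(2,3,\dots,n,n)).
\]

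I would prove this identity directly from the combinatorial definition of $\LLT_h$, after the substitution $q\mapsto q+1$. Writing $A(\kappa)=\{i\in[n-1]:\kappa(i)<\kappa(i+1)\}$ for the ascent set of a coloring $\kappa$, the binomial expansion $(q+1)^{\asc(\kappa)}=\sum_{T\subseteq A(\kappa)}q^{|T|}$ and an interchange of summation give
\[
\LLT_h(\z;q+1)=\sum_{T\subseteq[n-1]} q^{|T|}\sum_{\kappa\colon A(\kappa)\supseteq T} z^{\kappa}.
\]
For fixed $T$, the condition $A(\kappa)\supseteq T$ forces $\kappa$ to be strictly increasing on each connected component of $[n]$ under the edges $\{\{i,i+1\}:i\in T\}$, with no interaction between distinct components; hence the inner sum factors as $\prod_{i} e_{c_i}=e_\alpha$, where $\alpha=(c_1,\ldots,c_\ell)$ is the composition recording component sizes. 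The change of variable $I:=[n-1]\setminus T$ gives $|T|=n-1-|I|$ and identifies this $\alpha$ with $\alpha(I)$; substituting $q\mapsto q-1$ then yields the desired identity. The step requiring the most care will be the complement switch $I\leftrightarrow T$: the set $T$ of \emph{forced ascents} and the set $I$ indexing the Young subgroup $\mathfrak{S}_I$ are complementary in $[n-1]$, and verifying that the composition attached to $T$ through the connected component decomposition coincides with the composition $\alpha(I)$ attached to $I$ through \eqref{eq:SI} is what makes the two independent identifications of the graded $\mathfrak{S}_n$-modules align.
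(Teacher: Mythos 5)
Your argument is correct; the $H_{\Pi_n}(q)$ half is essentially the paper's (they index faces by chains $\emptyset\subsetneq A_1\subsetneq\cdots\subsetneq A_d\subsetneq[n]$ rather than by ordered set partitions, but it is the same bijection and the same stabilizer computation), while the $R(H^*(\Yh);q)\otimes\mathbf{U}$ half takes a genuinely different route. The paper stays inside equivariant topology: for $h=(2,3,\dots,n,n)$ the variety $\Xh$ is the permutohedral toric variety, so $H^*_{T/\Delta}(\Xh)$ is the Stanley--Reisner ring of $\Pi_n$; decomposing that ring into $\mathfrak{S}_n$-orbits of square-free monomials gives $R(H^*_{T/\Delta}(\Xh);q)=\sum_I \frac{q^{|I|}}{(1-q)^{|I|}}\Ind_{\mathfrak{S}_I}^{\mathfrak{S}_n}\mathbf{1}$, from which $R(H^*_T(\Xh);q)$ follows after tensoring with $H^*(B\Delta)$; applying the Poincar\'e-duality palindromicity of Corollary~\ref{coro:palindromicity_XhT} and then dividing by $(1-q)^n$ via $R(H^*_T(\Xh);q)=\tfrac{1}{(1-q)^n}R(H^*(\Yh);q)$ produces \eqref{eq:induction_description}. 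That chain of reasoning uses only the GKM/localization framework and never invokes the Shareshian--Wachs or Carlsson--Mellit theorems. You instead apply $\ch$, invoke the identification $\ch R(H^*(\Yh);q)=\LLT_h(\z;q)$ from Proposition~\ref{prop:L_Yh} (which sits downstream of Brosnan--Chow and Carlsson--Mellit via Proposition~\ref{prop:LLT}), and reduce the theorem to the single symmetric function identity $\LLT_h(\z;q+1)=\sum_{\alpha\vDash n}e_\alpha\,q^{n-\ell(\alpha)}$, which you then prove from the coloring definition by binomial expansion of $(q+1)^{\asc(\kappa)}$ and the observation that $\sum_{\kappa:A(\kappa)\supseteq T}z^\kappa=e_{\alpha([n-1]\setminus T)}$; all of these steps are correct, and the complement switch $I=[n-1]\setminus T$ does line up the two composition labelings. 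Thus you trade topological self-containedness for a shorter, purely combinatorial verification: your route directly establishes the $e$-expansion (which in the paper is the path-graph case of Alexandersson--Sulzgruber's Theorem~\ref{theo:al-su}, recovered only afterward in Corollary~\ref{coro:LLT(q+1)} and the discussion following it), whereas the paper's proof of Theorem~\ref{theo:h=23nn} does not depend on that combinatorial theorem and would in fact yield it as a corollary.
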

\begin{proof}
First we shall observe that $H_{\Pi_n}(q)$ is given by \eqref{eq:induction_description}. For each proper nonempty subset $A$ of $[n]$, we define
\[
F_A:=\{(x_1,\dots,x_n)\in \Pi_n\mid \sum_{i\in I}x_i=|A|(|A|+1)/2\}.
\]
It is known that $F_A$ is a facet of $\Pi_n$ and any facet of $\Pi_n$ is of this form. The intersection $F_A\cap F_B$ is nonempty if and only if either $A\subset B$ or $B\subset A$. Since $\Pi_n$ is simple, this implies that any codimension $d$ face of $\Pi_n$ is of the form $\bigcap_{k=1}^d F_{A_k}$, where 
\[
\emptyset\subsetneq A_1\subsetneq A_2\subsetneq \cdots\subsetneq A_d\subsetneq [n].
\]
The symmetric group $\mathfrak{S}_n$ acts on the face $\bigcap_{k=1}^d F_{A_k}$ by 
\[
w\left(\bigcap_{k=1}^d F_{A_k}\right)=\bigcap_{k=1}^d F_{w(A_k)}\quad\text{for $w\in \mathfrak{S}_n$}.
\]

To each $I=\{i_1,\dots,i_d\}\subset [n-1]$ with $i_1<\cdots<i_d$, we associate a sequence
\[
\emptyset\subsetneq [i_1]\subsetneq [i_2]\subsetneq \cdots\subsetneq [i_d]\subsetneq [n]
\]
and hence a face $F(I):=\bigcap_{k=1}^d F_{[i_k]}$ of $\Pi_n$. 
The isotropy subgroup at the face $F(I)$ is $\mathfrak{S}_I$ and any face of $\Pi_n$ is in the $\mathfrak{S}_n$-orbit of $F(I)$ for some $I\subset [n-1]$. Since the cardinality $|I|$ is the codimension of $F(I)$, the dimension of $F(I)$ is $n-1-|I|$. 
This means that
\[
F_i(\Pi_n)=\sum_{I\subset [n-1],\ n-1-|I|=i}\Ind_{\mathfrak{S}_I}^{\mathfrak{S}_n}\mathbf{1}\quad\text{as $\mathfrak{S}_n$-modules.}
\]
Therefore, 
\begin{equation} \label{eq:FPi}
F_{\Pi_n}(q)=\sum_{i=0}^{n-1}F_i(\Pi_n)q^i=\sum_{I\subset [n-1]}\left(\Ind_{\mathfrak{S}_I}^{\mathfrak{S}_n}\mathbf{1}\right)q^{n-1-|I|}.
\end{equation}
Since $H_{\Pi_n}(q)=F_{\Pi_n}(q-1)$ by definition, $H_{\Pi_n}(q)$ is given by \eqref{eq:induction_description}. 

Now we shall observe that $R(H^*(\Yh);q)\otimes\mathbf{U}$ is given by \eqref{eq:induction_description}. 
The circle group $\Delta$ consisting of scalar matrices in the diagonal torus $T$ acts trivially on the flag variety $\Fln$ and the induced action of $T/\Delta$ on $\Xh$ is effective. Since $\Xh$ is a toric variety, the equivariant cohomology $H^*_{T/\Delta}(\Xh)$ is isomorphic to the face ring of $\Pi_n$. Therefore, each facet $F_A$ $(\emptyset\not=A\subsetneq [n])$ associates a degree two element $\tau_A$ in the equivariant cohomology ring and $\tau_A$'s generate the ring. Moreover, 
\begin{enumerate}
\item $\tau_{A_1}\cdots \tau_{A_d}\not=0$ \text{if and only if} \text{$\bigcap_{k=1}^d F_{A_k}\not=\emptyset$},
\item monomials in $\tau_A$'s form an additive basis of $H^*_{T/\Delta}(\Xh)$, 
\item $w\cdot \tau_{A}=\tau_{w(A)}$ for $w\in \mathfrak{S}_n$. 
\end{enumerate} 

For a subset $I=\{i_1,\dots,i_d\}$ of $[n-1]$ with $i_1<\cdots<i_d$, we define
\[
\tau(I):=\tau_{[i_1]}\cdots \tau_{[i_d]}.
\]
Then, it follows from the observation above that the isotropy subgroup at $\tau(I)$ is $\mathfrak{S}_I$ and we have a decomposition 
\begin{equation*} \label{eq:HTDelta_Xh}
H^*_{T/\Delta}(\Xh)=\bigoplus_{I\subset [n-1]}\Ind_{\mathfrak{S}_I}^{\mathfrak{S}_n}\Big(\C[\tau_{[i_1]},\dots,\tau_{[i_d]}]\tau(I)\Big)\quad\text{as $\mathfrak{S}_n$-modules,}
\end{equation*}
where we understand $\C[\tau_{[i_1]},\dots,\tau_{[i_d]}]\tau(I)=\C$ when $I=\emptyset$. It follows that 
\begin{equation} \label{eq:RHTDelta}
R(H^*_{T/\Delta}(\Xh);q)=\sum_{I\subset [n-1]}\frac{q^{|I|}}{(1-q)^{|I|}}\Ind_{\mathfrak{S}_I}^{\mathfrak{S}_n}\mathbf{1}.
\end{equation}

Since the action of $\Delta$ on $\Xh$ is trivial, we have $H^*_T(\Xh)=H^*_{T/\Delta}(\Xh)\otimes H^*(B\Delta)$ and the $\mathfrak{S}_n$-action on $H^*(B\Delta)=\C[t_1+\cdots+t_n]$ is trivial. Therefore, it follows from \eqref{eq:RHTDelta} that 
\[
\begin{split}
R(H^*_T(\Xh);q)=&R(H^*_{T/\Delta}(\Xh);q)R(H^*(B\Delta);q)\\
=&\left(\sum_{I\subset [n-1]}\frac{q^{|I|}}{(1-q)^{|I|}}\Ind_{\mathfrak{S}_I}^{\mathfrak{S}_n}\mathbf{1}\right)\frac{1}{1-q}\\
=&\frac{1}{(1-q)^n}\sum_{I\subset [n-1]}q^{|I|}(1-q)^{n-1-|I|}\Ind_{\mathfrak{S}_I}^{\mathfrak{S}_n}\mathbf{1}.
\end{split}
\]
Since $h=(2,3,\dots,n,n)$, we have $|h|=n-1$; so the above description together with the duality in Corollary~\ref{coro:palindromicity_XhT} yields
\[
\begin{split}
(-q)^nR(H^*_T(\Xh);q)\otimes\mathbf{U}&=q^{n-1}R(H^*_T(\Xh);q^{-1})\\
&=\frac{q^{n-1}}{(1-q^{-1})^n}\sum_{I\subset [n-1]}q^{-|I|}(1-q^{-1})^{n-1-|I|}\Ind_{\mathfrak{S}_I}^{\mathfrak{S}_n}\mathbf{1}\\
&=\frac{q^n}{(q-1)^n}\sum_{I\subset [n-1]}(q-1)^{n-1-|I|}\Ind_{\mathfrak{S}_I}^{\mathfrak{S}_n}\mathbf{1}.
\end{split}
\] 
Dividing the both sides above by $(-q)^n$, we obtain
\begin{equation} \label{eq:RHTXH_theta}
R(H^*_T(\Xh);q)\otimes\mathbf{U}=\frac{1}{(1-q)^n}\sum_{I\subset [n-1]}(q-1)^{n-1-|I|}\Ind_{\mathfrak{S}_I}^{\mathfrak{S}_n}\mathbf{1}.
\end{equation}
On the other hand, we have
\begin{equation} \label{eq:RHT_q_Yh}
R(H^*_T(\Xh);q)=\frac{1}{(1-q)^n}R(H^*(\Yh);q)
\end{equation}
by Proposition~\ref{prop:L_Yh}. Comparing \eqref{eq:RHTXH_theta} with \eqref{eq:RHT_q_Yh}, we obtain 
\begin{equation*} \label{eq:Yh_and_face_Sn_module}
R(H^*(\Yh);q)\otimes\mathbf{U}=\sum_{I\subset [n-1]}(q-1)^{n-1-|I|}\Ind_{\mathfrak{S}_I}^{\mathfrak{S}_n}\mathbf{1},
\end{equation*}
proving the desired fact. 
\end{proof}

Since 
\[
\LLT_h(\z;q)=\sum_{i=0}^\infty \ch(H^{2i}(\Yh)q^i
\]
by Proposition~\ref{prop:L_Yh}, Theorem~\ref{theo:h=23nn} says that when $h=(2,3,\dots,n,n)$, we have 
\begin{equation} \label{eq:LLT_h_polynomial}
\LLT_h(\z;q)=\sum_{I\subset [n-1]}\ch\left(\Ind_{\mathfrak{S}_I}^{\mathfrak{S}_n}\mathbf{U}\right)(q-1)^{n-1-|I|}.
\end{equation}
Since 
\[
\mathfrak{S}_I=\mathfrak{S}_{i_1}\times \mathfrak{S}_{i_2-i_1}\times\cdots\times \mathfrak{S}_{n-i_d}
\]
for $I=\{i_1,\dots,i_d\}$ with $i_1<\cdots<i_d$, we have 
\begin{equation} \label{eq:ePI}
\ch\left(\Ind_{\mathfrak{S}_I}^{\mathfrak{S}_n}\mathbf{U}\right)=e_{i_1}e_{i_2-i_1}\cdots e_{n-i_d}=e_{P(I)}
\end{equation}
by definition of the Frobenius characteristic $\ch$, where $e_i$ denotes the $i$-th elementary symmetric function and $P(I)$ denotes the partition $(i_1,i_2-i_1,\dots,n-i_d)$ of $n$ associated to $I$. Therefore, we obtain the following corollary from \eqref{eq:LLT_h_polynomial}, \eqref{eq:ePI}, and \eqref{eq:FPi}. It gives a geometrical meaning of $\LLT_h(\z;q+1)$ for $h=(2,3,\dots,n,n)$. 

\begin{coro} \label{coro:LLT(q+1)}
When $h=(2,3,\dots,n,n)$, we have
\[
\LLT_h(\z;q+1)=\sum_{I\subset [n-1]}q^{n-1-|I|}e_{P(I)}=\sum_{i=0}^{n-1}\ch(F_i(\Pi_n)\otimes\mathbf{U})q^i
\]
where $F_i(\Pi_n)$ is the $\mathfrak{S}_n$-module generated by $i$-dimensional faces of $\Pi_n$, see \eqref{eq:FH}. 
\end{coro}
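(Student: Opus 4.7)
The plan is to obtain both equalities by a one-step substitution into the identity
\[
\LLT_h(\z;q)=\sum_{I\subset[n-1]}\ch(\Ind_{\mathfrak{S}_I}^{\mathfrak{S}_n}\mathbf{U})(q-1)^{n-1-|I|}
\]
recorded as \eqref{eq:LLT_h_polynomial}, which was already extracted from Theorem~\ref{theo:h=23nn}. Replacing $q$ by $q+1$ collapses each factor $(q-1)^{n-1-|I|}$ into $q^{n-1-|I|}$, so the right-hand side becomes $\sum_{I\subset[n-1]}\ch(\Ind_{\mathfrak{S}_I}^{\mathfrak{S}_n}\mathbf{U})\,q^{n-1-|I|}$. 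Applying the identification $\ch(\Ind_{\mathfrak{S}_I}^{\mathfrak{S}_n}\mathbf{U})=e_{P(I)}$ from \eqref{eq:ePI} then yields the first equality of the corollary directly.

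For the second equality, I would regroup the sum by codimension. By \eqref{eq:FPi}, the $\mathfrak{S}_n$-module $F_i(\Pi_n)$ decomposes as $\bigoplus_{|I|=n-1-i}\Ind_{\mathfrak{S}_I}^{\mathfrak{S}_n}\mathbf{1}$. Tensoring with $\mathbf{U}$ commutes with the direct sum decomposition, and the standard identity $\ch(V\otimes\mathbf{U})=\omega(\ch V)$, together with $\ch(\Ind_{\mathfrak{S}_I}^{\mathfrak{S}_n}\mathbf{1})=h_{P(I)}$ and $\omega(h_\lambda)=e_\lambda$, gives $\ch(F_i(\Pi_n)\otimes\mathbf{U})=\sum_{|I|=n-1-i}e_{P(I)}$. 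Multiplying by $q^i$ and summing over $i=0,\dots,n-1$ recovers $\sum_{I\subset[n-1]}q^{n-1-|I|}e_{P(I)}$, which matches the middle expression in the corollary.

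There is essentially no obstacle: all of the geometric content has been absorbed into Theorem~\ref{theo:h=23nn} (which was itself proved via the toric description of the permutohedral variety and the duality of Corollary~\ref{coro:palindromicity_XhT}), and the only nontrivial representation-theoretic ingredient is the interaction of $\otimes\mathbf{U}$ with induction from a Young subgroup. This can be handled either through $\omega$ as above, or equivalently by noting that $\mathbf{U}|_{\mathfrak{S}_I}$ is the product of the sign characters on the factors $\mathfrak{S}_{i_{k+1}-i_k}$, so that the Frobenius characteristic of its induction to $\mathfrak{S}_n$ is the product $\prod_{k}e_{i_{k+1}-i_k}=e_{P(I)}$, as already invoked for the first equality.
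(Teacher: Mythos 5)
Your proposal is correct and takes essentially the same route as the paper: the paper asserts that Corollary~\ref{coro:LLT(q+1)} "follows from \eqref{eq:LLT_h_polynomial}, \eqref{eq:ePI}, and \eqref{eq:FPi}," and your argument simply unfolds that citation, using the substitution $q\mapsto q+1$ in \eqref{eq:LLT_h_polynomial} together with \eqref{eq:ePI} for the first equality, and \eqref{eq:FPi} together with the standard facts $\ch(V\otimes\mathbf{U})=\omega(\ch V)$, $\ch(\Ind_{\mathfrak{S}_I}^{\mathfrak{S}_n}\mathbf{1})=h_{P(I)}$, $\omega(h_\lambda)=e_\lambda$ for the second.
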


\begin{rema}
The presence of the factor $\mathbf{U}$ in $F_i(\Pi_n)\otimes \mathbf{U}$ suggests to consider an $\mathfrak{S}_n$-module $\widetilde{F}_i(\Pi_n)$ generated by \emph{oriented} $i$-dimensional faces of $\Pi_n$. Indeed, we orient faces of $\Pi_n$ as follows, where an orientation of a face is a choice of an equivalence class of bases for a vector space tangent to the face.    
First, we fix an orientation of $\Pi_n$.  To each facet $F_A$ where $A$ is a subset of $[n]$, let $\mathbf{n}_A$ be the inward orthonormal vector to $\Pi_n$ and we give an orientation $o_{F_A}$ to $F_A$ such that $(\mathbf{n}_A,o_{F_A})$ agrees with the orientation of $\Pi_n$.  Any codimension $d$-face $F$ of $\Pi_n$ is the intersection of $d$ facets, i.e.\ $F=\bigcap_{j=1}^d F_{A_j}$ where
\[
\emptyset\subsetneq A_1\subsetneq A_2\subsetneq \cdots\subsetneq A_d\subset [n]
\]
and we give an orientation $o_F$ to $F$ such that $(\mathbf{n}_{A_1},\dots,\mathbf{n}_{A_d},o_F)$ agrees with the orientation of $\Pi_n$. The action of $\sigma\in \mathfrak{S}_n$ on $\Pi_n$ maps $\mathbf{n}_{A_1},\dots,\mathbf{n}_{A_d}$ to  $\mathbf{n}_{\sigma(A_1)},\dots,\mathbf{n}_{\sigma(A_d)}$ in this order, so $\sigma$ preserves the orientations of faces if and only if it preserves the orientation of $\Pi_n$, and this is equivalent to $\sgn(\sigma)=1$ because the normal vector $(1,\dots,1)$ to $\Pi_n$ in $\R^n$ is fixed under the action of $\sigma$ and $\sigma$ permutes the coordinates of $\R^n$. Therefore, $\widetilde{F}_i(\Pi_n)$ is isomorphic to $F_i(\Pi_n)\otimes\mathbf{U}$ as $\mathfrak{S}_n$-modules.  
\end{rema}

Finally, we shall observe that the former identity in Corollary~\ref{coro:LLT(q+1)} agrees with the combinatorial $e$-expansion formula of $\LLT_h(\z;q+1)$ in \cite{al-pa18, al-su22}. In order to state the $e$-expansion formula, we need some notation. 

Let $G_h$ be the unit interval graph associated to $h$, i.e.\ the vertex set is $[n]$ and $\{i,j\}$ is an edge of $G_h$ if and only if $j\le h(i)$ (when $i<j$) or $i\le h(j)$ (when $j<i$).
Let $\theta$ be an orientation of the graph $G_h$, that assigns to each edge $\{i,j\}$ a directed edge $\overrightarrow{ij}$ or $\overrightarrow{ji}$.
We say that a directed edge $\overrightarrow{ij}$ is ascending in $\theta$ if $i<j$ and is descending in $\theta$ if $i>j$. 
For a vertex $i$ of $G_h$, the \emph{highest reachable vertex}, denoted ${\rm hrv}(\theta,i)$, is defined as the maximal $j$ such that there is a directed path from $i$ to $j$ in $\theta$ using only ascending edges.
The orientation $\theta$ defines a set partition $\pi(\theta)$ of the vertices $[n]$ of $G_h$, where two vertices belong to the same block if and only if they have the same highest reachable vertex.
Finally, let $\lambda(\theta)$ denote the partition given by the sizes of the blocks in $\pi(\theta)$. 

\begin{theo}[\cite{al-su22}] \label{theo:al-su}
Let $h$ be any Hessenberg function on $[n]$, $O(h)$ the set of all orientations of the unit interval graph $G_h$, and ${\rm asc}(\theta)$ the number of ascending edges in $\theta\in O(h)$. Then 
\begin{equation} \label{eq:al-su}
\LLT_h(\z;q+1)=\sum_{\theta\in O(h)}q^{{\rm asc}(\theta)}e_{\lambda(\theta)}.
\end{equation}
\end{theo}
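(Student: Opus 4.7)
\emph{Plan.} The approach is combinatorial: reduce Theorem~\ref{theo:al-su} to a coloring-indexed identity and then prove that identity bijectively. Using the definition of $\LLT_h$ as a generating function over colorings and the expansion
\[
e_{\lambda(\theta)}=\prod_{B\text{ a block of }\pi(\theta)} e_{|B|}=\sum_{\kappa'\colon\,\kappa'\text{ is strictly increasing on each block of }\pi(\theta)} z^{\kappa'}
\]
(where each block is ordered by the natural order inherited from $[n]$), comparing the coefficient of $z^\kappa$ on both sides of Theorem~\ref{theo:al-su} reduces the theorem to the coloring-indexed identity
\begin{equation*}
(1+q)^{\asc(\kappa)}=\sum_{\substack{\theta\in O(h)\\ \kappa\text{ str.\ inc.\ on every block of }\pi(\theta)}} q^{\asc(\theta)}\qquad \text{for every }\kappa\colon[n]\to\N. \tag{$\ast$}
\end{equation*}

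To prove $(\ast)$, I would exhibit, for each $\kappa$, a bijection
\[
\Phi_\kappa\colon 2^{A(\kappa)}\longrightarrow\bigl\{\theta\in O(h)\,:\,\kappa\text{ str.\ inc.\ on every block of }\pi(\theta)\bigr\},\qquad |S|=\asc(\Phi_\kappa(S)),
\]
where $A(\kappa)$ is the set of ascending edges of $\kappa$. The naive map sending $S$ to the orientation whose ascending edges are exactly $S$ and whose other edges are descending is \emph{not} a bijection: already for $h=(3,3,3)$ and $\kappa=(1,1,2)$, taking $S=A(\kappa)=\{\{1,3\},\{2,3\}\}$ yields the orientation with single block $\{1,2,3\}$ on which $\kappa$ is not strictly increasing. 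The corrected $\Phi_\kappa$ must reassign certain ascending edges of $S$ to the level edges $\{\{i,j\}\,:\,\kappa(i)=\kappa(j)\}$ of $\kappa$, so that the block partition $\pi(\Phi_\kappa(S))$ splits each color class of $\kappa$ into strictly increasing chains, while preserving the total ascending count.

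The main obstacle is to describe $\Phi_\kappa$ canonically for arbitrary $\kappa$ and arbitrary Hessenberg function $h$. A plausible construction processes the color classes of $\kappa$ from highest color downward; within each color class $C$ it applies a local swap rule that, whenever two vertices of $C$ lie in the same block of the current orientation, exchanges one outgoing ascending edge with the ascending orientation of a level edge inside $C$ so as to split them into distinct blocks. Showing that this iterative procedure terminates in a unique valid orientation with the correct ascent count, and that the resulting map is invertible, is the technical heart of the argument; this is essentially the content of \cite{al-su22}, carried out there via an explicit combinatorial bijection together with a sign-reversing involution on pairs (coloring, ascent subset). An alternative is induction on the number of edges of $G_h$: adjoin a single edge, apply a Carlsson--Mellit-type edge-addition recursion for $\LLT_h$, and match it with a case split on the orientation of the new edge, the remaining difficulty being to control how blocks of $\pi(\theta)$ merge or split under the edge addition for an arbitrary base Hessenberg function.
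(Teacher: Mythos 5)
First, note that the paper does not prove Theorem~\ref{theo:al-su} at all: it is imported as a black-box citation from \cite{al-su22}, and the paper only \emph{verifies} it for $h=(2,3,\dots,n,n)$ and $h=(n,\dots,n)$ via the cohomology of the twin. So you are attempting more than the paper does, and your first step is sound: expanding $e_{\lambda(\theta)}=\prod_B e_{|B|}$ as the sum of $z^{\kappa'}$ over colorings that are strictly increasing on each block, and extracting the coefficient of $z^\kappa$ from $\LLT_h(\z;q+1)=\sum_\kappa z^\kappa(1+q)^{\asc(\kappa)}$, correctly and losslessly reduces the theorem to the identity $(\ast)$. Your diagnosis that the naive map $S\mapsto(\text{orientation with ascending edge set }S)$ fails is also correct and well illustrated.

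However, the proof stops exactly where the theorem begins. The identity $(\ast)$ \emph{is} the theorem, and the bijection $\Phi_\kappa$ is never constructed: the ``local swap rule'' is described only in outline, with no precise specification of which outgoing ascending edge is exchanged with which level edge, no proof that the procedure is well defined and terminates in an orientation whose block partition is strictly increasing on every color class, no proof that the ascent count $|S|$ is preserved, and no inverse map. These are not routine details --- the interaction between level edges of $\kappa$ and the highest-reachable-vertex partition is delicate (an ascending level edge inside a color class need not merge its endpoints into one block, and splitting one color class can change reachability for others), which is precisely why \cite{al-su22} and \cite{dadd20} require substantial machinery (linear relations among vertical-strip LLT polynomials, respectively a recursive characterization) rather than a short bijection. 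The alternative edge-addition induction you mention is likewise only named, not carried out. As it stands the argument establishes an equivalent reformulation of the theorem and then defers the content to the cited reference, so there is a genuine gap at the heart of the proof.
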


In our case where $h=(2,3,\dots,n,n)$, the unit interval graph $G_h$ is the path graph $P_n$ with vertices $[n]$ in an increasing order.
To each $I=\{i_1,\dots,i_d\}$ with $i_1<\cdots<i_d$, we assign an orientation $\theta_I$ of $P_n$ such that an edge $\{i,i+1\}$ $(i\in [n-1])$ is ascending unless $i=i_k$ for some $k=1,\dots,d$. Then 
\[
{\rm asc}(\theta_I)=n-1-|I|
\]
and for any vertex $i$ of $P_n$, the highest reachable vertex is given by 
\[
{\rm hrv}(i,\theta_I)=i_k \quad\text{if $i_{k-1}<i\le i_k$}
\] 
where $i_0=0$ and $i_{d+1}=n$. Therefore, 
\[
\lambda(\theta_I)=(i_1,i_2-i_1,\dots,n-i_d)=P(I).
\]
Since the correspondence $I\to \theta_I$ gives a bijection between subsets of $[n-1]$ and orientations of $P_n$, the observation above shows that the former identity in Corollary~\ref{coro:LLT(q+1)} coincides with \eqref{eq:al-su} when $h=(2,3,\dots,n,n)$.

\section{Complete graph} \label{sect:7}

When $h=(n,\dots,n)$, i.e.\ $h(j)=n$ for any $j\in [n]$, the unit interval graph $G_h$ is a complete graph $K_n$ with $[n]$ as vertices. In this section, we confirm Theorem~\ref{theo:al-su} when $h=(n,\dots,n)$ from our viewpoint. We begin with 

\begin{prop} \label{prop:R(N_n)}
Let $N_n$ be the graded $\mathfrak{S}_n$-module defined in \eqref{eq:Nn}. Then 
\[
\begin{split}
R(N_n,\q)&=\sum_{I\subset [n-1]}\Big(\prod_{i\in I}\q^i\prod_{j\in [n-1]\backslash I}(1-\q^j)\Big)\Ind_{\mathfrak{S}_I}^{\Sn}\mathbf{1}\\
&=\sum_{I\subset [n-1]}\Big(\prod_{j\in [n-1]\backslash I}(\q^j-1)\Big)\Ind_{\mathfrak{S}_I}^{\Sn} \mathbf{U}.
\end{split}
\]
In particular, $N_n$ is the regular representation of $\mathfrak{S}_n$ if we forget the grading. 
\end{prop}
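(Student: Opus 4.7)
The plan is to compute the graded $\mathfrak{S}_n$-character of $N_n$ by a Chevalley-plus-Frobenius-reciprocity argument and reduce to a combinatorial identity, then derive the second formula from palindromicity. By Chevalley's theorem, $\C[t_1,\dots,t_n] \cong \C[t_1,\dots,t_n]^{\mathfrak{S}_n} \otimes N_n$ as graded $\mathfrak{S}_n$-modules, so $\mathrm{tr}_{N_n}(\sigma,q) = \prod_{k=1}^n(1-q^k)/\prod_i(1-q^{\mu_i})$ for $\sigma$ of cycle type $\mu=(\mu_1,\mu_2,\dots)$. By Frobenius reciprocity, $\chi_{\Ind_{\mathfrak{S}_I}^{\mathfrak{S}_n}\mathbf{1}}(\sigma) = N_I(\sigma)$ equals the number of ordered set partitions $(B_1,\dots,B_{d+1})$ of $[n]$ with block sizes given by the composition $(i_1,i_2-i_1,\dots,n-i_d)$ such that each cycle of $\sigma$ lies in a single block. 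The first formula then reduces to the combinatorial identity
\[
\sum_{I \subseteq [n-1]} \prod_{i \in I} q^i \prod_{j \notin I}(1-q^j)\, N_I(\sigma) = \frac{\prod_{k=1}^n(1-q^k)}{\prod_i(1-q^{\mu_i})}.
\]

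Dividing both sides by $\prod_{j=1}^{n-1}(1-q^j)$ and re-indexing the left-hand side as a sum over ordered set partitions of the cycles of $\sigma$ (with $s_k=|B_1|+\cdots+|B_k|$ the cumulative cycle length), this reduces to $\sum_{(B_1,\dots,B_{d+1})}\prod_{k=1}^d q^{s_k}/(1-q^{s_k}) = (1-q^n)/\prod_i(1-q^{\mu_i})$. I would prove this bijectively: expanding $q^{s_k}/(1-q^{s_k})=\sum_{m_k\ge 1}q^{m_k s_k}$ and encoding a decorated datum $(B_1,\dots,B_{d+1};m_1,\dots,m_d)$ by the tuple $\mathbf{a}=(a_1,\dots,a_\ell)$ with $a_j=\sum_{k\ge b(j)}m_k$, where $b(j)$ is the block index of the $j$-th cycle, gives a weight-preserving bijection onto tuples $\mathbf{a}\in\Z_{\ge 0}^\ell$ with $\min_j a_j=0$ (the constraint encoding the non-emptiness of the last block); the weight becomes $q^{\sum_j \ell_j a_j}$ where $\ell_j$ is the length of the $j$-th cycle. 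Then $\sum_{\mathbf{a}:\min a_j=0}q^{\sum\ell_j a_j}$ equals $\sum_{\mathbf{a}\in\Z_{\ge 0}^\ell}q^{\sum\ell_j a_j}-\sum_{\mathbf{a}\in\Z_{\ge 1}^\ell}q^{\sum\ell_j a_j}=(1-q^n)/\prod_i(1-q^{\mu_i})$, as required.

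The second formula follows from the first via Proposition~\ref{prop:palindromicity_Yh} applied with $h=(n,\dots,n)$: since $\Yh=\Fln$ and $H^*(\Fln)$ with the dagger action equals $N_n$ by Proposition~\ref{prop:star&dagger}, we have $q^{n(n-1)/2}R(N_n;q^{-1})=R(N_n;q)\otimes\mathbf{U}$. Substituting the first formula, using $\sum_{i\in I}i+\sum_{j\notin I}j=n(n-1)/2$, and applying the projection formula $(\Ind_{\mathfrak{S}_I}^{\mathfrak{S}_n}\mathbf{1})\otimes\mathbf{U}=\Ind_{\mathfrak{S}_I}^{\mathfrak{S}_n}\mathbf{U}$ yields the second expression. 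Finally, setting $q=1$ in the first formula kills every term except $I=[n-1]$, leaving $\Ind_{\{e\}}^{\mathfrak{S}_n}\mathbf{1}=\C[\mathfrak{S}_n]$, which gives the ungraded regularity claim. The main obstacle is verifying the bijection in the combinatorial step, but the explicit encoding via $\mathbf{a}$ makes both directions transparent.
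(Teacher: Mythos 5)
Your proof is correct, and it takes a genuinely different route from the paper's. The paper decomposes $\C[t_1,\dots,t_n]$ directly as an $\mathfrak{S}_n$-module: every monomial lies in the orbit of a unique weakly decreasing monomial $t_1^{a_1}\cdots t_n^{a_n}$, the orbit type is recorded by the subset $I(a)\subset[n-1]$ encoding where the exponent sequence strictly drops, and the Hilbert series of the span of monomials with a fixed $I(a)=I$ is computed by a telescoping geometric series to be $\frac{1}{1-q^n}\prod_{i\in I}\frac{q^i}{1-q^i}$; dividing by $\prod_k(1-q^k)^{-1}$ then gives the first formula. You instead work at the level of graded characters: Chevalley and the standard trace formula give $\mathrm{tr}_{N_n}(\sigma,q)=\prod_{k=1}^n(1-q^k)/\prod_i(1-q^{\mu_i})$, the induced-character count $N_I(\sigma)$ reduces the first formula to a rational-function identity for each cycle type $\mu$, and you prove that identity by a clean weight-preserving bijection between decorated ordered set partitions of the cycles and ``depth sequences'' $\mathbf{a}\in\Z_{\ge 0}^\ell$ with $\min a_j=0$. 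Both are valid and roughly the same length; the paper's orbit decomposition is more object-level, while your character computation is more flexible and arguably makes the combinatorial content (the telescoping) more transparent via the inclusion--exclusion $\frac{1-q^n}{\prod_i(1-q^{\mu_i})}$ at the end. For the second formula and the $q=1$ specialization you match the paper's argument, though you invoke the palindromicity of $R(N_n;q)$ via Propositions~\ref{prop:palindromicity_Yh} and~\ref{prop:star&dagger} (with $h=(n,\dots,n)$) rather than citing Lemma~\ref{lemm:palindromicity}(1) directly; the latter is what the paper does, is shorter, and avoids any appearance of dependence on the later twin machinery, but your route is not circular since the quantities it relies on do not use Proposition~\ref{prop:R(N_n)}.
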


\begin{proof}
First we find $R(H^*(BT),\q)$. Remember that $H^*(BT)=\C[t_1,\dots,t_n]$ and $w\cdot t_i=t_{w(i)}$ for $w\in \mathfrak{S}_n$. Therefore, any monomial in $\C[t_1,\dots,t_n]$ is in the $\mathfrak{S}_n$-orbit of a monomial of the form 
\begin{equation} \label{eq:10-1}
t_1^{a_1}t_2^{a_2}\cdots t_n^{a_n}\quad (a_1\ge a_2\ge \cdots \ge a_n\ge 0).
\end{equation}
Any $a=(a_1,a_2,\dots,a_n)$ above is of the form 
\[
a_1=\dots=a_{i_1}> a_{i_1+1}=\dots=a_{i_2}>\dots>a_{i_{d-1}+1}=\dots=a_{i_d}>a_{i_d+1}=\dots=a_n
\]
for some $1\le i_1<i_2<\cdots<i_d\le n-1$, and we define
\[
I(a):=\{i_1,i_2,\dots,i_d\} \subset [n-1]. 
\]
For instance, if all $a_i$'a are distinct, then $I(a)=[n-1]$,
and if all $a_i$'s are same, then $I(a)=\emptyset$.
Then the isotropy subgroup at the element $t_1^{a_1}\cdots t_n^{a_n}$ of the $\mathfrak{S}_n$-action on $H^*(BT)=\C[t_1,\dots,t_n]$ is $\mathfrak{S}_{I(a)}$, so the $\mathfrak{S}_n$-module generated by the $\mathfrak{S}_n$-orbit of $t_1^{a_1}\cdots t_n^{a_n}$ is $\Ind_{\mathfrak{S}_{I(a)}}^{\mathfrak{S}_n}\mathbf{1}$.

We compute the Hilbert series of the vector space spanned by elements $t_1^{a_1}\cdots t_n^{a_n}$ with $I(a)=I=\{i_1,\dots,i_d\}$. Although the cohomology degree of each $t_i$ is $2$, we think of $t_i$ as degree $1$ for simplicity. The Hilbert series of the vector space is computed as follows: 
\begin{align*}
	&\sum_{I(a)=I}\q^{a_1+\dots+a_n}\\
	=& \sum_{a_{i_1}>a_{i_2}>\dots>a_{i_d}>a_n\ge 0}\q^{i_1a_{i_1}+(i_2-i_1)a_{i_2}+\dots+(i_d-i_{d-1})a_{i_d}+(n-i_d)a_n}\\
	=& \sum_{a_{i_2}>\dots>a_{i_d}>a_n\ge 0}\Big(\sum_{a_{i_1}=a_{i_2}+1}^\infty \q^{i_1a_{i_1}}\Big)\q^{(i_2-i_1)a_{i_2}+\dots+(i_d-i_{d-1})a_{i_d}+(n-i_d)a_n}\\
	=&\sum_{a_{i_2}>\dots>a_{i_d}>a_n\ge 0}\Big(\frac{\q^{i_1(a_{i_2}+1)}}{1-\q^{i_1}}\Big)\q^{(i_2-i_1)a_{i_2}+\dots+(i_d-i_{d-1})a_{i_d}+(n-i_d)a_n}\\
	=&\frac{\q^{i_1}}{1-\q^{i_1}}\sum_{a_{i_2}>a_{i_3}>\dots>a_{i_k}>a_n\ge 0}\q^{i_2a_{i_2}+(i_3-i_2)a_{i_3}+\dots+(i_d-i_{d-1})a_{i_d}+(n-i_d)a_n}\\
	=& \dots\\
	=&\Big(\prod_{j=1}^d\frac{\q^{i_j}}{1-\q^{i_j}}\Big)\sum_{a_n\ge 0}\q^{na_n}
	=\frac{1}{1-\q^n}\prod_{j=1}^d\frac{\q^{i_j}}{1-\q^{i_j}}
	=\frac{1}{1-\q^n}\prod_{i\in I}\frac{\q^{i}}{1-\q^{i}}.
\end{align*}
Therefore
\[
R(H^*(BT),\q)=\frac{1}{1-\q^n}\sum_{I\subset [n-1]}\Big(\prod_{i\in I}\frac{\q^{i}}{1-\q^{i}}\Big)\Ind_{\mathfrak{S}_I}^{\Sn}\mathbf{1}.
\]
On the other hand, we have 
\[
R(H^*(BT),\q)=\Big(\prod_{i=1}^n\frac{1}{1-\q^i}\Big)R(N_n,\q)
\]
by \eqref{eq:BT_Nn}. Comparing the two descriptions of $R(H^*(BT);\q)$ above, we obtain 
\[
R(N_n,\q)=\sum_{I\subset [n-1]}\Big(\prod_{i\in I}\q^i\prod_{j\in [n-1]\backslash I}(1-\q^j)\Big)\Ind_{\mathfrak{S}_I}^{\Sn}1
\]
which is the former identity in the proposition. 

The latter identity in the proposition follows from the former identity proved above and the palindromicity 
\[
q^{n(n-1)/2}R(N_n;q^{-1})=R(N_n;q)\otimes\mathbf{U}
\]
in Lemma~\ref{lemm:palindromicity} (1). 

The last statement in the proposition can be seen as follows. 
We set $q=1$ in the former identity in the proposition. Then the products at the right hand side of the identity vanish unless $I=[n-1]$. Therefore, we have 
\[
N_n=R(N_n;1)=\Ind_{\mathfrak{S}_{[n-1]}}^{\mathfrak{S}_n}\mathbf{1}.
\]
Here, $\mathfrak{S}_{[n-1]}=\mathfrak{S}_1\times \cdots\times \mathfrak{S}_1$ by definition \eqref{eq:SI}, so $\Ind_{\mathfrak{S}_{[n-1]}}^{\mathfrak{S}_n}\mathbf{1}$ is the regular representation of $\mathfrak{S}_n$. 
\end{proof}

By Proposition~\ref{prop:L_Yh} we have 
\[
\LLT_h(\z;q)=\sum_{i=0}^\infty \ch(H^{2i}(\Yh)q^i
\]
and $H^*(\Yh)$ for $h=(n,\dots,n)$ is the same as $N_n$ as graded $\mathfrak{S}_n$-module by Proposition~\ref{prop:star&dagger}. Therefore, the following corollary follows from Proposition~\ref{prop:R(N_n)}.

\begin{coro} \label{coro:LLTnn}
Let $h = (n,\dots,n)$. Then 
\begin{equation} \label{eq:LLTnn}
\LLT_h(\z;q+1)=\sum_{I\subset [n-1]}\left(\prod_{j\in [n-1]\backslash I}((q+1)^j-1)\right)e_{P(I)}
\end{equation}
where $P(I)$ denotes the partition $(i_1,i_2-i_1,\dots,n-i_d)$ of $n$ associated to $I=\{i_1,\dots,i_d\}$ with $i_1<\cdots<i_d$ as before. 
\end{coro}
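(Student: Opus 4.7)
The plan is to reduce the statement, in a small number of steps, to the second identity in Proposition~\ref{prop:R(N_n)}. First, I would invoke Proposition~\ref{prop:L_Yh}, which identifies $\LLT_h(\z;q)$ with $\sum_i \ch(H^{2i}(\Yh))q^i$. For the special Hessenberg function $h=(n,\dots,n)$, Proposition~\ref{prop:star&dagger} identifies $H^*(\Yh)$ (with the dagger action) with $N_n$ as a graded $\Sn$-module, so after applying $\ch$ coefficient-wise to the graded module $R(N_n;q)$, one obtains
\[
\LLT_h(\z;q)=\sum_{i\ge 0}\ch(N_n^{2i})\,q^i.
\]

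Next, I would plug in the second expression for $R(N_n;q)$ from Proposition~\ref{prop:R(N_n)},
\[
R(N_n,q)=\sum_{I\subset [n-1]}\Big(\prod_{j\in [n-1]\backslash I}(q^j-1)\Big)\Ind_{\mathfrak{S}_I}^{\Sn}\mathbf{U},
\]
and take the Frobenius characteristic term by term. The scalar prefactor $\prod_{j\in[n-1]\setminus I}(q^j-1)$ passes through $\ch$ unchanged. For the induced-module factor, the same computation that yielded \eqref{eq:ePI} in Section~\ref{sect:6} applies here: the restriction of the sign representation $\mathbf{U}$ to the Young subgroup $\mathfrak{S}_I=\mathfrak{S}_{i_1}\times \mathfrak{S}_{i_2-i_1}\times\cdots\times\mathfrak{S}_{n-i_d}$ is the outer tensor product of sign representations of the factors, and since $\ch$ of the sign representation of $\mathfrak{S}_k$ is $e_k$, we have $\ch(\Ind_{\mathfrak{S}_I}^{\Sn}\mathbf{U})=e_{i_1}e_{i_2-i_1}\cdots e_{n-i_d}=e_{P(I)}$. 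Combining these two steps gives
\[
\LLT_h(\z;q)=\sum_{I\subset [n-1]}\Big(\prod_{j\in [n-1]\setminus I}(q^j-1)\Big) e_{P(I)},
\]
and finally substituting $q\mapsto q+1$ produces the stated formula.

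There is no real obstacle here; every ingredient has already been established, and the derivation is essentially an assembly. The only point worth stating carefully is the convention that the $\mathbf{U}$ appearing inside $\Ind_{\mathfrak{S}_I}^{\Sn}\mathbf{U}$ is the sign representation of $\mathfrak{S}_I$ obtained by restriction from the sign representation of $\Sn$, which is precisely what makes $\ch$ multiplicative on the block decomposition of $\mathfrak{S}_I$ and delivers the product $e_{P(I)}$.
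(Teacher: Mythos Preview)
Your proposal is correct and follows essentially the same route as the paper: invoke Proposition~\ref{prop:L_Yh} to write $\LLT_h(\z;q)$ as the Frobenius series of $H^*(\Yh)$, use Proposition~\ref{prop:star&dagger} to identify $H^*(\Yh)$ with $N_n$ for $h=(n,\dots,n)$, apply the second identity of Proposition~\ref{prop:R(N_n)}, and take $\ch$ using $\ch(\Ind_{\mathfrak{S}_I}^{\Sn}\mathbf{U})=e_{P(I)}$ as in \eqref{eq:ePI}. The paper's own derivation is the same, only stated more tersely.
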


This corollary manifestly shows the $e$-positivity of $\LLT_h(\z;q+1)$ for $h=(n,\dots,n)$. 
In order to observe the agreement of \eqref{eq:LLTnn} and \eqref{eq:al-su} for $h=(n,\dots,n)$, we prepare an elementary lemma. 

\begin{lemm} \label{lemm:elementary}
Let $1\le i<n$ be positive integers. Then 
\begin{equation} \label{eq:elementary}
\sum_{1\le a_1<\cdots<a_i<n}\ \prod_{j=1}^ix^{a_j-j}=\prod_{j=1}^i\frac{x^{n-j}-1}{x^j-1}
\end{equation}
where $a_1,\dots,a_i$ run over all positive integers with $1\le a_1<\cdots<a_i<n$. 
\end{lemm}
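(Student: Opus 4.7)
The identity is the standard evaluation of a Gaussian binomial coefficient as a generating function for partitions in a box, and the plan is to make this explicit by a simple substitution.

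First, I would set $b_j := a_j - j$ for $j=1,\ldots,i$. The strictly increasing chain $1 \le a_1 < a_2 < \cdots < a_i < n$ translates under this bijection to the weakly increasing chain $0 \le b_1 \le b_2 \le \cdots \le b_i \le n-1-i$, and the exponent on the left-hand side of \eqref{eq:elementary} becomes $b_1+b_2+\cdots+b_i$. Hence the left-hand side equals
\[
\sum_{0\le b_1\le b_2\le\cdots\le b_i\le n-1-i}x^{b_1+b_2+\cdots+b_i},
\]
which is the classical generating function (by area) for partitions fitting inside an $i\times(n-1-i)$ rectangle. It is a textbook fact that this evaluates to the Gaussian binomial coefficient $\binom{n-1}{i}_x=\prod_{j=1}^i\frac{x^{n-j}-1}{x^j-1}$, which is precisely the right-hand side of \eqref{eq:elementary}.

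For a self-contained derivation, I would argue by induction on $n$ using the $q$-Pascal recurrence $\binom{n-1}{i}_x=\binom{n-2}{i}_x+x^{n-1-i}\binom{n-2}{i-1}_x$. Denoting the left-hand side of \eqref{eq:elementary} by $L_i(n)$ and splitting the sum according to whether $a_i=n-1$ or $a_i\le n-2$, one obtains the matching recurrence $L_i(n)=L_i(n-1)+x^{n-1-i}L_{i-1}(n-1)$; together with the base case $i=1$ (a geometric series summing to $(x^{n-1}-1)/(x-1)$) this establishes \eqref{eq:elementary}. I expect no real obstacle here: the identity is classical, and the only insight required is the change of variables $b_j=a_j-j$ that turns strictly increasing tuples into weakly increasing ones.
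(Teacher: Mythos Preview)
Your proposal is correct, and your self-contained inductive argument is essentially identical to the paper's proof: the paper also inducts on $n$, splits the sum according to whether $a_i=n-1$ or $a_i<n-1$, and recombines the two pieces algebraically into $\prod_{j=1}^{i}\frac{x^{n-j}-1}{x^j-1}$ (the only cosmetic difference is that the paper takes $n=2$ as the base case rather than $i=1$). Your preliminary identification of the sum with the Gaussian binomial $\binom{n-1}{i}_x$ via the substitution $b_j=a_j-j$ is a pleasant conceptual framing that the paper does not make explicit, but it is not a genuinely different route---the $q$-Pascal recurrence you then invoke is exactly the split the paper performs.
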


\begin{proof}
We prove \eqref{eq:elementary} by induction on $n$. When $n=2$, the both sides in \eqref{eq:elementary} are $1$, so it holds in this case. 

Suppose that \eqref{eq:elementary} holds for $n-1$. We split the left hand side in \eqref{eq:elementary} into two parts according as $a_i=n-1$ or $a_i<n-1$, i.e. 
\[
x^{n-1-i}\sum_{1\le a_1<\cdots<a_{i-1}<n-1}\ \prod_{j=1}^{i-1}x^{a_j-j}+\sum_{1\le a_1<\cdots<a_{i}<n-1}\ \prod_{j=1}^ix^{a_j-j}.
\]
Applying the inductive assumption to these two sums, the above turns into 
\[
\begin{split}
&
x^{n-1-i}\prod_{j=1}^{i-1}\frac{x^{n-1-j}-1}{x^j-1}+\prod_{j=1}^i\frac{x^{n-1-j}-1}{x^j-1}\\
=&\left(\prod_{j=1}^{i-1}\frac{x^{n-1-j}-1}{x^j-1}\right)\left(x^{n-1-i}+\frac{x^{n-1-i}-1}{x^i-1}\right)
=\prod_{j=1}^{i}\frac{x^{n-j}-1}{x^j-1},
\end{split}
\]
completing the induction step. 
\end{proof}

Now we shall observe the agreement of \eqref{eq:LLTnn} and \eqref{eq:al-su}. Recall that since $h=(n,\dots,n)$, the unit interval graph $G_h$ is the complete graph $K_n$ with vertices $[n]$. 

{\bf Step 1.} We first observe the agreement of the coefficients of $e_n$. 
As for \eqref{eq:al-su}, $\lambda(\theta)=n$ if and only if each vertex of $K_n$ has an ascending path to the vertex $n$ in the orientation $\theta$. The latter is equivalent to the condition that each vertex $j$ different from $n$ has an ascending edge to at least one vertex in $n-j$ vertices $j+1,\dots,n$. Therefore
\begin{equation} \label{eq:en}
\sum_{\theta\in O(h),\ \lambda(\theta)=n}q^{\asc(\theta)}=\prod_{j=1}^{n-1}\left((q+1)^{n-j}-1\right)=\prod_{j=1}^{n-1}\left((q+1)^{j}-1\right).
\end{equation} 
On the other hand, as for \eqref{eq:LLTnn}, $P(I)=n$ if and only if $I=\emptyset$. Therefore, 
the coefficient of $e_n$ in \eqref{eq:LLTnn} agrees with the last term in \eqref{eq:en}.

{\bf Step 2.} We fix a subset $I=\{i_1,\dots,i_d\}$ of $[n-1]$ with $i_1<i_2<\cdots<i_d$. The case $I=\emptyset$ treated in Step 1 can be regarded as the case where $d=0$, so we assume $d\ge 1$. The ordered set 
\[
P(I)=(i_1,i_2-i_1,\cdots,i_d-i_{d-1},n-i_d)
\]
gives a partition of $n$. We prove that the coefficient of $e_{P(I)}$ in \eqref{eq:al-su} agrees with 
\begin{equation*}
\sum_{\theta\in O(h),\ \lambda(\theta)=P(I)}q^{\asc(\theta)}
\end{equation*}
by induction on $d$. We have observed the agreement for $d=0$ in Step 1. Since $\lambda(\theta)=P(I)$, $\pi(\theta)$ consists of $d+1$ ordered blocks $(\pi_0(\theta),\dots,\pi_d(\theta))$ where the largest vertex in $\pi_r(\theta)$ is smaller than that in $\pi_s(\theta)$ if and only if $r<s$. 

Applying the inductive assumption to the first $d$ blocks $(\pi_0(\theta),\dots,\pi_{d-1}(\theta))$ and to the last block $\pi_d(\theta)$ respectively, we have 
\begin{equation} \label{eq:step3}
\sum_{\theta\in O(h),\ \lambda(\theta)=P(I)}q^{\asc(\theta)}=A\prod_{j\in [i_d-1]\backslash I}\left((q+1)^j-1\right)\prod_{j=1}^{n-i_d-1}\left((q+1)^j-1\right)
\end{equation}
where $A$ is the term coming from directed edges between vertices in $\bigcup_{k=0}^{d-1}\pi_k(\theta)$ and $\pi_d(\theta)$. 
We note that an edge between $a\in \bigcup_{k=0}^{d-1}\pi_k(\theta)$ and $b\in \pi_d(\theta)$ must be directed as $\overleftarrow{ab}$ if $a<b$ but both $\overrightarrow{ab}$ and $\overleftarrow{ab}$ can be allowed if $a>b$. If $\bigcup_{k=0}^{d-1}\pi_k(\theta)=\{a_1,\dots,a_{i_d}\}$ with $a_1<\cdots<a_{i_d}$, then for each $a_j$ $(j=1,\dots,i_d)$ there are $a_j-j$ elements in $\pi_d(\theta)$ which are smaller than $a_j$. Therefore
\[
A=\sum_{1\le a_1<\cdots<a_{i_d}<n}\ \prod_{j=1}^{i_d}(q+1)^{a_j-j} 
\]
where $a_1,\dots,a_{i_d}$ run over all positive integers with $1\le a_1<\cdots<a_{i_d}<n$, and applying Lemma~\ref{lemm:elementary} to the right hand side above, we obtain 
\[
A=\prod_{j=1}^{i_d}\frac{(q+1)^{n-j}-1}{(q+1)^j-1}
\]
since the numbers of vertices in $\bigcup_{k=1}^{d-1}\pi_k(\theta)$ and $\pi_d(\theta)$ are respectively $i_d$ and $n-i_d$. Plugging the above $A$ in \eqref{eq:step3}, \eqref{eq:step3} reduces to 
\[
\sum_{\theta\in O(h),\ \lambda(\theta)=P(I)}q^{\asc(\theta)}=\prod_{j\in [n-1]\backslash I} \left((q+1)^j-1\right).
\]
Here the right hand side above agrees with the coefficient of $e_{P(I)}$ in \eqref{eq:LLTnn}. Thus the agreement of \eqref{eq:al-su} and \eqref{eq:LLTnn} has been confirmed for $h=(n,\dots,n)$. 

\bigskip
\noindent
{\bf Acknowledgment.} We thank Hiraku Abe for pointing out that the quotient ring \eqref{eq:intro_quotient_Yh} agrees with the cohomology ring of the twin $\Yh$.  After writing this paper, we learned from Tatsuya Horiguchi that Precup-Sommers also notice the relation \eqref{eq:intro_Yh}, see \cite[Appendix A]{pr-so22}. % between unicellular polynomials and twins introduced by Ayzenberg-Buchstaber \cite{ay-bu21}.  
We thank him for informing us of the paper.

\end{document}